\definecolor{darkblue}{rgb}{0.2,0.2,0.71}
\definecolor{shadecolor}{rgb}{0.95, 0.95, 0.86}
\definecolor{darkgreen}{rgb}{0.2, 0.5,  0}
\def\&{\vspace{-5pt}&}
\def\Tr{ {\rm Tr}}
\def \eqref#1{(\ref{#1})}
\def \& {&\hspace{-10pt}}
\newcommand{\bt}{\beta} 
\newcommand{\ga}{\gamma} 
\renewcommand{\O}{\Omega}
\newcommand{\e}{\epsilon}
\newcommand{\br}{{\mathbb R}}
\newcommand{\gr}{\mathfrak{g}} 
\newcommand{\g}{\mathfrak{gl}_r}
\newcommand{\Y}{\Lambda}
    \newcommand{\B}{\mathcal{B}}
    \newcommand{\V}{\mathcal{V}}
     \newcommand{\lop}[1]{\mathfrak{L}(#1)}
      \newcommand{\Lie}{\mathrm{Lie}}
\newcommand{\bil}[2]{{\mathrm{Tr}\left( #1 \circ #2\right)}}
\newcommand{\nneg}{\mathfrak{n}}
  \newcommand{\bneg}{\mathfrak{b}}
\newcommand{\f}{\mathcal{F}}
\newcommand{\Q}{\mathcal{Q}}
\newtheorem{theorem}{Theorem}[section]
\newtheorem{example}[theorem]{Example}
\newtheorem{exercise}[theorem]{Exercise}
\newtheorem{lemma}[theorem]{Lemma}
\newtheorem{remark}[theorem]{Remark}
\newtheorem{proposition}[theorem]{Proposition} 
\newtheorem{corollary}[theorem]{Corollary} 
\newtheorem{definition}[theorem]{Definition}
\newtheorem{scheme}[theorem]{Scheme}
\def\V {\mathcal V}
\def\bt{\begin{theorem}}
\def\et{\end{theorem}}
\def\bc{\begin{corollary}}
\def\ec{\end{corollary}}
\def\bx{\begin{example}}
\def\ex{\end{example}}
\def\bxr{\begin{exercise}\small}
\def\exr{\end{exercise}}
\def\bl{\begin{lemma}}
\def\el{\end{lemma}}
\def\bd{\begin{definition}}
\def\ed{\end{definition}}
\def\bp{\begin{proposition}}
\def\ep{\end{proposition}}
\def\br{\begin{remark}}
\def\er{\end{remark}}
\def\be{\begin{equation}}
\def\ee{\end{equation}}
\def\bsc{\begin{scheme}}
\def\esc{\end{scheme}}
\def\beq{\begin{equation*}}
\def\eeq{\end{equation*}}
\def\&{\hspace{-15pt}&}
\def\bea{\begin{eqnarray}}
\def\eea{\end{eqnarray}}
\def\L{\mathcal L}
\def\T{{\mathcal T}}
\def\1{{\bf 1}}
\def\w{{\mathbf w}}
\newcommand{\h}{\mathfrak{h}}
\newcommand{\sll}{\mathfrak{sl}}
\newcommand{\ad}{\mathrm{ad}}
\begin{document}
\title{From Adler-Gelfand-Dickey Brackets to Logarithmic Dubrovin-Frobenius manifolds}
\author{Yassir Ibrahim Dinar}
\date{}
\maketitle
\begin{abstract}
We construct a new local Poisson bracket compatible with the second unconstrained  Adler-Gelfand-Dickey bracket. The resulting bihamiltonian structure  admits a dispersionless limit and the leading term defines a logarithmic Dubrovin–Frobenius manifold. Furthermore, we show that  this Dubrovin-Frobenius manifold can be constructed on the orbits space of the standard representation of the permutation group.

\end{abstract}
{\small \noindent{\bf Mathematics Subject Classification (2020).} 37K25; 37k30, 53D45, 17B80, 53D17, 13A50, 17B68, 17B08.}


\setcounter{equation}{0}
\setcounter{theorem}{0}
\setcounter{equation}{0}
\setcounter{theorem}{0}

\section{Introduction}

One of the main methods  to obtain examples of Dubrovin-Frobenius manifolds exist within the theory of flat pencils of metrics (equivalently, nondegenerate compatible Poisson brackets of hydrodynamic type). Besides, the leading terms of  a certain type of compatible local  Poisson brackets (a local bihamiltonian structure) which admit(s) a dispersionless limit form a flat pencil of metric \cite{DuRev}.  Moreover, we can obtain compatible local Poisson brackets for any nilpotent element in simple Lie algebras using Drinfeld-Sokolov reduction (see for examples \cite{DS}, \cite{BalFeh1}, \cite{fehercomp},\cite{gDSh2}, \cite{mypaper4}). One of these Poisson brackets is  (or satisfies identities leading to) a classical $W$-algebra.

In \cite{mypaper6}, we developed a uniform construction of algebraic Dubrovin-Frobenius manifolds using Drinfeld-Sokolov bihamiltonian structures associated with what is called distinguished nilpotent elements of semisimple type \cite{Elash}. Then we  analyzed  Drinfeld-Sokolov bihamiltonian structures associated with  subregular nilpotent elements  in the simple Lie algebras \( \sll_3 \) and \( \sll_4 \) in \cite{rank 3}. We  demonstrated  that the bihamiltonian structures fail to define   flat  pencils of metrics using the techniques of \cite{mypaper6}.  However,  starting from  classical \( W \)-algebras, we identified alternative bihamiltonian structures. The leading terms of these new structures define logarithmic Dubrovin-Frobenius manifolds. 

This leads us to study the  second  Adler-Gelfand-Dickey (AGD) bracket $\mathbb B_2^\Q$ \cite{dickeylec}.  It is  defined over the space of differential operators of the form 
\begin{equation}\label{operator_L}
L = D^r + v^1(x) D^{r-1} + v^2(x) D^{r-2} + \dots + v^{r-1}(x) D + v^r(x),~~  D = \frac{d}{dx}.
\end{equation}
 This space is embedded within the ring of pseudodifferential operators of the form 
\[
A = \sum_{k=-\infty}^N a_k(x) D^k.
\]
Where  the residue, differential part and product are given by
\[
\operatorname{res}(A) = a_{-1}, \quad A_+ = \sum_{k=0}^N a_k(x) D^k,~~D^k\circ a_l(x) = \sum_{n\geq 0}\frac{k (k - 1) \cdots (k - n + 1)}{n!} a_l^{(n)}(x).
\]
Then the bracket   of two functionals $F$ and $G$ under \( \mathbb{B}_2^\Q \) is defined  by 
\begin{equation}\label{AGD_bracket}
\{ F, G \}_2^\Q = \int_{S^1} \operatorname{res} \left[ \left( L \dfrac{\delta F}{\delta L} \right)_+ L \dfrac{\delta G}{\delta L} - \left( \dfrac{\delta F}{\delta L} L \right)_+ \dfrac{\delta G}{\delta L} L \right] dx.
\end{equation}
Here, for a functional $F$, we set 
\begin{equation*}\label{variational_derivative}
\dfrac{\delta F}{\delta L} = \sum_{i=1}^r D^{i - r - 1} \left( \dfrac{\delta F}{\delta v^i(x)} \right).
\end{equation*}
Note that the Lie derivative \( \Lie_{\partial_{v^r}} \mathbb{B}_2^\Q \) defines the first Adler-Gelfand-Dickey bracket, and the pair \( (\mathbb{B}_2^\Q, \Lie_{\partial_{v^r}} \mathbb{B}_2^\Q) \) forms a bihamiltonian structure. This  bihamiltonian structure admits a dispersionless limit. However, its leading term   fails to define a flat pencil of metrics (see Proposition \ref{firsagd}). Recall that  its reduction to the space of operators \( L \) with \( v^1(x) = 0 \) yields the constrained Adler-Gelfand-Dickey brackets associated with the $(r-1)$-KdV hierarchies. In this article, however, we focus exclusively on the unconstrained bracket \eqref{AGD_bracket}.  

We then reconsider the construction  of $\mathbb{B}_2^\Q$ via Drinfeld-Sokolov reduction \cite{DS}. It is the classical $W$-algebra associated to regular nilpotent elements in the complex general Lie algebra $\g$ (see section \ref{revDS}). The reduction allows us to express the brackets of $\mathbb{B}_2^\Q$ using different densities than $v^i(x)$ and to use the theory of Lie algebras to analyze the entries of the brackets. 

 To give more details about the main results in this article, let $\e_{i,j}\in \g$ denote the $r \times r$ matrix whose $(\nu, \mu)$-entry is $\delta_{\nu,i}\delta_{\mu,j}$, and fix the following $\mathfrak{sl}_2$-triple $\{L_2, h, f\}$ associated with the regular nilpotent element $L_2$:
\[
L_2 = \sum_{i=1}^{r-1} \e_{i,i+1}, \quad 
h = \frac{1}{2}\sum_{i=1}^{r}(r - 2i + 1)\e_{i,i},\quad
f = \sum_{i=1}^{r-1} i(r - i)\e_{i+1,i}.
\]
We consider the affine loop space 
\[
\mathcal{Q} := L_2 + \lop{\g^f},\quad\text{where}\quad
\g^f := \{g \in \g : [g, f] = 0\}, \quad
\lop{\g^f} := C^\infty(S^1,\g^f).
\]
on which the second Adler-Gelfand-Dickey bracket $\mathbb{B}_2^\Q$ will be defined through Drinfeld-Sokolov reduction. 

Define coordinates \( s^i: Q \to \mathbb{C} \) on $Q := L_2 + \g^f$ by
\[
s^i(g) = \frac{1}{i}\mathrm{Tr}(g^i),\quad (i \neq r),\quad 
s^r(g) = \frac{r - 1}{(r - 1) + \alpha r}\left(\mathrm{Tr}(g^r) + \alpha\, s^1(g)\, s^{r-1}(g)\right),\quad g\in Q.
\]
where $\alpha$ satisfies the quadratic equation
\[
r\alpha^2 + 2(r - 1)\alpha + (r - 1) = 0.
\]
Then the brackets $\{s^i(x), s^j(y)\}_2^\Q$ defined by $\mathbb{B}_2^\Q$ depend at most linearly on the density $s^{r-1}(x)$. Consequently, the Lie derivative 
\[
\mathbb{B}_1^\Q := \mathrm{Lie}_{\partial_{s^{r-1}(x)}}\mathbb{B}_2^\Q
\]
is a local Poisson bracket compatible with $\mathbb{B}_2^\Q$. Furthermore, the resulting  bihamiltonian structure admits a dispersionless limit and the leading term defines a contravariant flat pencil of metrics on $Q$. This flat pencil of metrics gives rise to a logarithmic Dubrovin-Frobenius manifold structure on an open dense subset of $Q$ (see Theorem  \ref{coordins} and Theorem \ref{FPonQ}]).

It is known from \cite{DLZ} that Dubrovin-Frobenius manifold associated with Drinfeld-Sokolov bihamiltonian structure for a regular nilpotent element in simple Lie algebra is locally isomorphic to the polynomial Dubrovin-Frobenius manifold arising on the orbits space of the underlying Weyl group. In this paper, we establish a similar relation by showing that the constructed logarithmic Dubrovin-Frobenius manifold can be realized on the orbits space of the underlying Weyl group  of $\g$,  which is simply the standard representation of the permutation group \( S_r \) (see Theorem \ref{FBonOrb}).

The article is organized as follows. In Sections 2 and 3, we review the theory of local Poisson brackets and Dubrovin-Frobenius manifolds and their relation to flat pencil of metrics. This serves as the foundational framework for the rest of the article. In Section 4, we introduce regular nilpotent elements in the general linear algebra, along with the necessary notations and identities.

Section 5 provides details on the Drinfeld-Sokolov reduction and the construction of the second Adler-Gelfand-Dickey bracket $\mathbb{B}_2^\mathcal{Q}$. The formulations in Section 5 are used in Section 6 to analyze the entries of the brackets of $\mathbb{B}_2^\mathcal{Q}$. In Sections 7 and 8, we investigate the change of these  entries under two successive changes of coordinates: invariant coordinates introduced in Section 7, and further coordinates introduced in Section 8. The coordinates defined in Section 8 enable us to construct a Poisson bracket $\mathbb{B}_1^\mathcal{Q}$ that is compatible with $\mathbb{B}_2^\mathcal{Q}$.

In Section 9, we demonstrate that the leading term of the bihamiltonian structure gives rise to logarithmic Dubrovin-Frobenius manifolds. Here, we use the techniques of \cite{Arsie} and \cite{WuZuo} on constructing similar structures. Additionally, in Section 10, we show that these Dubrovin-Frobenius manifolds can be constructed using invariant theory for the standard representations of permutation groups, using mainly Miura transformation and the coordinates introduced in section 7. Finally, in the last section, we give some remarks about the geometry of the bihamiltonian structure 
$(\mathbb{B}_2^\mathcal{Q},\mathbb{B}_1^\mathcal{Q})$.

Throughout this paper, the base field is the complex numbers \(\mathbb{C}\). Unless otherwise stated, finite-dimensional manifolds are complex manifolds. Smooth maps \(u:S^1\to M\), are differentiated with respect to the real parameter \(x\in S^1\), taking values in the complex manifold \(M\). The Einstein summation convention is used throughout.

\section{Geometry of local Poisson brackets}\label{sec:localPB}

\medskip
Let \(M\) be a manifold with local coordinates \((u^{1},\dots,u^{r})\).
The loop space $\lop M$ of $M$ is defined as the space of smooth functions from the circle $S^1$ to $M$. A local functional on~\(\lop M\)  is an integral of the form
\[
  \mathcal{F}[u]
  =
  \int_{S^{1}}
    F(u(x),u_{x}(x),\dots,u^{(m)}(x))\,dx,
\]
where the integrand \(F\) (called a density) is a holomorphic function of the  variables  \(\partial_{x}^{k}u^i(x)\).   A local Poisson bracket equips the space of  functionals with a Lie algebra structure. In terms of the  densities \(u^{i}(x)\),  it admits the form \cite{DZ}
\begin{equation}\label{eq:genLocPoissBra}
  \{u^{i}(x),u^{j}(y)\}
  \;=\;
  \sum_{k=0}^{N}
    T^{ij}_{k}\!\bigl(u(x),u_{x}(x),\dots,u^{(m)}(x)\bigr)\,
   \delta^{(k)}(x-y),
\end{equation}
for some natural number \(N\). The Dirac delta distribution $ \delta(x-y)$ is defined as
\begin{equation*}\label{eq:deltaFunction}
  \int_{S^{1}} f(y)\,\delta(x-y)\,dy
  \;=\;
  f(x).
\end{equation*}

\bd \cite{fehercomp}
A local Poisson bracket $\{\cdot,\cdot\}$ is called a classical $W$-algebra if there exist coordinates $(z^1,\dots,z^r)$ such that the corresponding brackets have the form
\begin{eqnarray}
\{z^2(x), z^2(y)\}&=& c\, \delta'''(x-y) + 2 z^2(x)\, \delta'(x-y) + z^2_x(x)\, \delta(x-y), \\\nonumber
\{z^2(x), z^j(y)\} &=& j\, z^j(x)\, \delta'(x-y) + (j-1)\, z^j_x(x)\, \delta(x-y), \quad j\neq 2,
\end{eqnarray}
for some nonzero constant $c$. 
\ed 
\bd A pair of local Poisson brackets $\{ \cdot, \cdot \}_1$ and $\{ \cdot, \cdot \}_2$ on $\lop M$ are  compatible or  form a bihamiltonian structure if 
\[
\{ \cdot, \cdot \}_{(\lambda)} := \{ \cdot, \cdot \}_2 + \lambda \{ \cdot, \cdot \}_1
\]
is a local Poisson bracket for any constant $\lambda$. 
\ed

From the formula of the Lie derivative of  local Poisson brackets  given in Example 2.3.1 of \cite{DZ}, we have:

\bc \label{compatible poiss}
For the vector field $X = \partial_{u^k(x)}$, the Lie derivative $\Lie_X \{ \cdot, \cdot \}$ is obtained by differentiating the entries of the Poisson bracket with respect to $u^k(x)$.
\ec

The following proposition provides a method to construct  a bihamiltonian structure.

\bp\label{serg} (Proposition 1 in \cite{serg})
Let $X$ be a vector field on $\lop M$, and let $\{ \cdot, \cdot \}$ be a local Poisson bracket on $\lop M$. If $\Lie_X^2 \{ \cdot, \cdot \} = 0$, then the brackets $\{ \cdot, \cdot \}$ and $\{ \cdot, \cdot \}_1 := \Lie_X \{ \cdot, \cdot \}$ form a bihamiltonian structure.
\ep

Let us fix a local Poisson bracket $\{ \cdot, \cdot \}_2$ on the loop space $\lop M$. Following \cite{DZ}, we assign degree $-1$ to the delta distribution $\delta(x - y)$ and degree $n$ to each derivative $\partial_x^n u^i(x)$. Then the local Poisson bracket $\{ \cdot, \cdot \}_2$ admits an expansion of the form
\begin{eqnarray}\label{loc poiss}
\{u^i(x), u^j(y)\}_2 &=& \sum_{k=-1}^{\infty}\{u^i(x),u^j(y)\}_2^{[k]},\\[5pt]\nonumber
\{u^i(x),u^j(y)\}_2^{[-1]} &=& F_2^{ij}(u(x))\,\delta(x - y),\\[5pt]\nonumber
\{u^i(x),u^j(y)\}_2^{[0]} &=& \Omega_2^{ij}(u(x))\,\delta'(x - y) + \Gamma_{2,k}^{ij}(u(x))\,u_x^k\,\delta(x - y),\\[5pt]\nonumber
\{u^i(x),u^j(y)\}_2^{[k]} &=& S_{2,k}^{ij}(u(x))\,\delta^{(k+1)}(x - y)+\cdots,\quad k>0.
\end{eqnarray}

Here, the densities \(F_2^{ij}(u(x))\), \(\Omega_2^{ij}(u(x))\), \(\Gamma_{2,k}^{ij}(u(x))\), and \(S_{2,k}^{ij}(u(x))\) depend only on  \(u^i(x)\) and not on their derivatives. As we identify \(M\) with the subspace of constant loops in \(\lop M\), these densities correspond to holomorphic functions on \(M\). To simplify notation and statements, we frequently omit explicitly writing the spatial variable \(x\) and treat these densities in the context as functions on \(M\).

The definition of the local Poisson bracket implies that the matrix \(F_2^{ij}(u)\) defines a finite-dimensional Poisson structure on \(M\), and the matrix \(\Omega_2^{ij}(u)\) is symmetric. Moreover, when the local Poisson bracket is expressed in other coordinates on \(M\), the matrices \(F_2^{ij}(u)\), \(\Omega_2^{ij}(u)\), and \(S_{2,k}^{ij}(u)\) transform as tensors of type \((2,0)\) (\textit{ibid.}). For the transformation properties of \(\Gamma_{2,k}^{ij}(u)\), see Corollary \ref{change of coord Walg}.

\bd \label{dispersionlessDef}
A local Poisson bracket $\{ \cdot, \cdot \}_2$  admits a dispersionless limit if $\{ \cdot, \cdot \}_2^{[-1]} = 0$ and $\{ \cdot, \cdot \}_2^{[0]} \neq 0$. In this case, $\{ \cdot, \cdot \}_2^{[0]}$ defines a Poisson bracket of hydrodynamic type on $\lop M$ and it is said to be nondegenerate  if $\det \Omega_2^{ij} \neq 0$ at some points in $M$.
\ed

The following theorem by Dubrovin and Novikov establishes a connection between contravariant metrics and local Poisson brackets.

\bt\cite{DN}\label{DN thm}
Under the notations in \eqref{loc poiss}, if $\{ \cdot, \cdot \}_2^{[0]}$ is a nondegenerate Poisson bracket of hydrodynamic type, then the matrix $\Omega_2^{ij}(u)$ defines a contravariant flat metric on an open subset of $M$, and $\Gamma_{2,k}^{ij}(u)$ are its contravariant Christoffel symbols.
\et

Notice that the matrix $\O^{ij}_2(u)$ defines a contravariant metric on the open subset 
\[
M_0=\{u\in M : \det(\O^{ij}_2(u))\neq 0\}\subseteq M.
\] 
By a slight abuse of terminology, we say that the metric is defined on $M$. The Christoffel symbols $\Gamma^{ij}_{2,k}(u)$   are determined uniquely on $M_0$ from the system of linear equations
\begin{align*} 
\O^{is}_2\Gamma^{jk}_{2,s} &= \O^{js}_2\Gamma^{ik}_{2,s},\\\nonumber
\Gamma^{ij}_{2,k} + \Gamma^{ji}_{2,k} &= \partial_{u^k} \O^{ij}_2.
\end{align*}
Flatness of the metric means that the corresponding Riemann curvature tensor 
\[
R^{ijk}_l := \O^{is}_2\left(\partial_{u^s}\Gamma^{jk}_{2,l} - \partial_{u^l}\Gamma^{jk}_{2,s}\right) + \Gamma^{ij}_{2,s}\Gamma^{sk}_{2,l} - \Gamma^{ik}_{2,s}\Gamma^{sj}_{2,l}
\]
vanishes identically. Moreover, a function $t(u)$ is called a flat coordinate  if  \be \label{flatcond}
\O^{is}_2\partial_{u^s} \xi_j + \Gamma^{is}_{2,j} \xi_s = 0, \quad i,j=1,\dots,r,~~~\xi_j=\partial_{u^j} t.
\ee
 Thus, the metric $\O^{ij}_2(u)$ is flat if  and only if there  exist locally $r$ functionally independent flat coordinates and in those coordinates the matrix $\O_2^{ij}$ is constant. 
 
Let $\{ \cdot, \cdot \}_1$ and $\{ \cdot, \cdot \}_2$ be compatible local Poisson brackets on $M$ admitting a dispersionless limits whose leading terms define  nondegenerate Poisson brackets of hydrodynamic type
\begin{eqnarray} \label{loc poiss pre}
\{ u^i(x), u^j(y) \}_\alpha^{[0]} = \Omega_\alpha^{ij}(u(x)) \delta'(x-y) + \Gamma_{\alpha,k}^{ij}(u(x)) u_x^k \delta(x-y), \quad \alpha = 1, 2.
\end{eqnarray}
Then the compatibility implies that the pair of matrices $(\Omega_2^{ij}(u), \Omega_1^{ij}(u))$ defines a flat pencil of metrics on $M$. Specifically, $\Omega_2^{ij} + \lambda \Omega_1^{ij}$ defines a flat metric  for any constant $\lambda$ and its Christoffel symbols equal $\Gamma_{2,k}^{ij} + \lambda \Gamma_{1,k}^{ij}$.  See \cite{DFP} for more details on the notion of contravariant metric and  flat pencil of metrics.

\section{Dubrovin-Frobenius manifolds} \label{DFMS}

A Dubrovin-Frobenius manifold \cite{DuRev} is a manifold equipped with a holomorphic  structure of a Frobenius algebra on the tangent space at each point that satisfies certain compatibility conditions. A Frobenius algebra is a commutative, associative algebra with identity $e$ and a nondegenerate bilinear form $\Pi$ that is invariant under the product, i.e., $\Pi(a \cdot b, c) = \Pi(a, b \cdot c)$. The bilinear form $\Pi$ defines a flat metric on the manifold, and the identity vector field $e$ must be constant with respect to it.

Let $M$ be a Dubrovin-Frobenius manifold. Let $(t^1, \ldots, t^r)$ be flat coordinates for $\Pi$ such that $e = \partial_{t^{r-1}}$. Then the compatibility conditions ensure the existence of a function $\mathbb{F}(t^1, \ldots, t^r)$, which encodes the structure of the Dubrovin-Frobenius manifold. This defines the flat metric $\Pi$ in terms of the third derivatives of the potential.
\begin{equation} \label{flat metric}
\Pi_{ij}(t) = \Pi(\partial_{t^i}, \partial_{t^j}) = \partial_{t^{r-1}} \partial_{t^i} \partial_{t^j} \mathbb{F}(t),
\end{equation}
and setting $\Omega_1^{ij}$ as the inverse of the matrix $\Pi_{ij}$, the structure constants of the Frobenius algebra are
\[
C_{ij}^k = \Omega_1^{kp} \partial_{t^p} \partial_{t^i} \partial_{t^j} \mathbb{F}(t).
\]

The associativity of the Frobenius algebra implies that $\mathbb{F}(t)$ satisfies the Witten-Dijkgraaf-Verlinde-Verlinde (WDVV) equations introduced in  \cite{wdvv}:
\begin{equation} \label{frob}
\partial_{t^i} \partial_{t^j} \partial_{t^k} \mathbb{F}(t) ~ \Omega_1^{kp} ~ \partial_{t^p} \partial_{t^q} \partial_{t^n} \mathbb{F}(t) =
\partial_{t^n} \partial_{t^j} \partial_{t^k} \mathbb{F}(t) ~ \Omega_1^{kp} ~ \partial_{t^p} \partial_{t^q} \partial_{t^i} \mathbb{F}(t),
\quad \forall i, j, q, n.
\end{equation}

The definition of a Dubrovin-Frobenius manifold includes the existence of an Euler vector field $E$ satisfying
\[
\Lie_E \mathbb{F}(t) = \left( 3 - d \right) \mathbb{F}(t) + \frac{1}{2} A_{ij} t^i t^j + B_i t^i + c, \quad d, A_{ij}, B_i, c \in \mathbb{C}.
\]
In this article, we assume $E$ takes the form
\[
E = \sum_i d_i t^i \partial_{t^i}, \quad d_{r-1} = 1, \quad d_i \in \mathbb{C}.
\]
A Dubrovin-Frobenius manifold is referred to as polynomial, algebraic, logarithmic, etc., depending on the properties of the corresponding potential $\mathbb{F}(t)$.

For any Dubrovin-Frobenius manifold, there is an associated  flat pencil of metrics. This pencil consists of the metric, called the intersection form, defined by the matrix 
\begin{equation} \label{finding intersection}
\Omega_2^{ij}(t) := \Lie_E (\Omega_1^{ik} \Omega_1^{jm} \partial_{t^m} \partial_{t^k} \mathbb{F}(t)),
\end{equation}
and the flat metric $\Omega_1^{ij}$. Conversely, under certain  conditions, a flat pencil of metrics on $M$ defines a unique (up to equivalence) Dubrovin-Frobenius manifold \cite{DFP}.

\section{Regular nilpotent element in $\g$}

We consider  the general complex Lie algebra $\g$ of rank $r$ with the nondegenerate invariant bilinear form  $\bil {\cdot}{\cdot}$. We denote the Lie bracket by $[\cdot, \cdot]$. Define the adjoint representation $\ad: \g \to \textrm{End}(\g)$ by $\ad_{g_1}(g_2) := [g_1, g_2]$. For $g \in \g$, let $\mathcal{O}_g$ denote the orbit of $g$ under  the adjoint action of the Lie group corresponding to $\g$, and let $\g^g$ denote the centralizer of $g$ in $\g$, i.e., $\g^g := \ker \ad_g$. An element $g$ is called nilpotent if $\ad_g$ is nilpotent in $\textrm{End}(\g)$, and it is called regular if $\dim \g^g = r$.

Let $\e_{i,j}$ denote the $r \times r$ matrix whose $(\nu, \mu)$-entry equals $\delta_{\nu,i}\delta_{\mu,j}$, $i,j=1,\ldots,r$. These matrices form a basis of $\g = \mathfrak{gl}_r$ and satisfy
\[
[\e_{i,j}, \e_{\mu,\nu}] = \delta_{\mu,j} \e_{i,\nu} - \delta_{i,\nu} \e_{\mu,j}, \quad \bil{\e_{i,j}}{\e_{\mu,\nu}} = \delta_{i,\nu} \delta_{\mu,j}.
\]
Following \cite{DS}, we fix the $\sll_2$-triple $\{L_2, h, f\}$:
\[
L_2 = \sum_{i=1}^{r-1} \e_{i,i+1}, \quad h = \frac{1}{2} \sum_{i=1}^{r} (r - 2i + 1) \e_{i,i}, f = \sum_{i=1}^{r-1} i(r-i) \e_{i+1,i}.
\]
Then 
\[
 \quad [L_2, f] = 2h, \quad [h, L_2] = L_2, \quad [h, f] = -f.
\]
 Note that $L_2,~f$ are regular nilpotent elements and $h$ is a regular semisimple element.  In addition, we define
\[
L_i = \sum_{k=1}^{r-i+1} \e_{k,i-1+k}, \quad K_i = \sum_{j=1}^{i-1} \e_{r-i+j+1,j}, \quad i = 1, \ldots, r.
\]
Thus, $L_1$ is the center of $\g$. Under the Dynkin grading
\[
\g = \bigoplus_{i \in \mathbb{Z}} \gr_i, \quad \gr_i := \{g \in \g : \ad_h g = i g\},
\]
we have 
\[
L_i \in \gr_i, \quad K_i \in \gr_{r-i+1}, \quad i = 1, \ldots, r.
\]
We consider the decomposition of $\g$ into $r$ irreducible submodules under the adjoint action of $\{L_2, h, f\}$,
\[
\g = \bigoplus_{i=1}^{r} \V_i, \quad \dim \V_i = 2(i-1) + 1.
\]
The elements $L_1, \dots, L_r$ lie in the centralizer $\g^{L_2}$, form a basis for it, and serve as highest weight vectors of the irreducible $\sll_2$-modules $\V_i$. By construction, $L_i \in \V_i$.

We use the duality between $\g^{L_2}$ and $\g^f$ under the bilinear form $\bil{\cdot}{\cdot}$ (see \cite{wang}) to fix  a basis $\gamma_i$ for $\g^f$ such that
\[
\bil{\gamma_i}{L_j} = \delta_{ij}, \quad i = 1, \ldots, r.
\]
Then $\gamma_i \in \gr_{-j}$ when $L_i \in \gr_j$. 

Define the Slodowy slice $Q := L_2 + \g^f$, and fix coordinates $(u^1, \ldots, u^r)$ on $Q$ such that
\[
Q = L_2 + \sum_{i=1}^r u^i \gamma_i.
\]
From the representation theory of $\sll_2$ subalgebras, we have $\g^f \oplus \ad_{L_2} \g = \g$. Hence, $Q$ is a transverse subspace to the orbit space $\mathcal{O}_{L_2}$ at $L_2$.

We establish the following basis for $\bigoplus_{i \leq 0} \gr_i$
\[
\gamma_i, \ad_{L_2} \gamma_i, \ldots, \frac{1}{(i-1)!} \ad_{L_2}^{i-1} \gamma_i, \quad i = 1, \ldots, r,
\]
and a basis for $\bigoplus_{i \geq 0} \gr_i$
\[
L_i, \ad_{f} L_i, \ldots, \ad_{f}^{i-1} L_i, \quad i = 1, \ldots, r.
\]
Note that, by definition, 
\[
\gamma_1, \gamma_{r-1}, \gamma_{r} \text{ equal } \frac{1}{r} L_1, \frac{1}{2} K_3, K_2, \text{ respectively}.
\]

\bl We have the following orthogonality relation
\[
\bil{\frac{1}{I!} \ad_{L_2}^I \gamma_i}{\ad_f^J L_j} = (-1)^I \frac{(2i-2)!}{(2i-I-2)!} \delta_{ij} \delta^{IJ}.
\]
\el

\begin{proof}
Using the representation theory of $\sll_2$-triples \cite{Humph}, one can prove inductively that
\[
[L_2, \ad_f^I L_j] = I(2j-I-1) \ad_f^{I-1} L_j.
\]
For $I = J = 1$, we compute
\[
\bil{\ad_{L_2} \gamma_i}{\ad_f L_j} = -\bil{\gamma_i}{\ad_{L_2} \ad_f L_j} = \bil{\gamma_i}{[L_j, 2h]} = -2(i-1) \delta_{ij}.
\]
By induction for $I > 1$, we obtain
\begin{eqnarray*}
\bil{\frac{1}{I!} \ad_{L_2}^I \gamma_i}{\ad_f^I L_j} &=& \bil{\frac{1}{I!} \ad_{L_2}^{I-1} \gamma_i}{\ad_{L_2} \ad_f^{I} L_j} \\
&=& -\frac{I(2j-I-1)}{I} \bil{\frac{1}{(I-1)!} \ad_{L_2}^{I-1} \gamma_i}{\ad_f^{I-1} L_j} \\
&=& (-1)^I \frac{(2i-2)!}{(2i-I-2)!} \delta_{ij} \delta^{IJ}.
\end{eqnarray*}
For $I > J$, we recursively reduce $\bil{\ad_{L_2}^I \gamma_i}{\ad_f^J L_j}$ to a zero term proportional to $\bil{\ad_{L_2}^{I-J-1} \gamma_i}{\ad_f L_j}$.
\end{proof}

\section{Drinfeld-Sokolov reduction} \label{revDS}

We review the construction of the classical $W$-algebra associated with the regular nilpotent element $L_2$, following the seminal work of Drinfeld and Sokolov \cite{DS}.

We consider the loop space $\lop \g$ of $\g$ and extend the bilinear form $\bil{\cdot}{\cdot}$ on $\g$ to $\lop \g$ by defining
\beq
(g_1|g_2) = \int_{S^1} \bil{g_1(x)}{g_2(x)} \, dx, \quad g_1, g_2 \in \lop \g.
\eeq
Given a functional $\mathcal{F}$ on $\lop \g$, its variational derivative (or gradient) $\delta \mathcal{F}(g)$ is defined by
\beq
\frac{d}{d\theta} \mathcal{F}(g + \theta \mathrm{w}) \big|_{\theta=0} = \big( \delta \mathcal{F}(g) | \mathrm{w} \big), \quad \textrm{for all } \mathrm{w} \in \lop \g.
\eeq
This leads to the Lie–Poisson bracket $\mathbb{B}$ on $\lop \g$, given for functionals $\mathcal{F}, \mathcal{I}$ by
\begin{eqnarray}\label{Poissbrac}
\{\mathcal{F}, \mathcal{I}\}(g(x)) &:=& \Big( \partial_x \delta \mathcal{I}(g(x)) + [g(x), \delta \mathcal{I}(g(x))] \Big| \delta \mathcal{F}(g(x)) \Big),
\end{eqnarray}
To express the brackets of  $\mathbb{B}$, we fix a basis $\xi_1, \xi_2, \ldots$ for $\g$ and a dual basis $\xi^1, \xi^2, \ldots$, satisfying $\bil{\xi_i}{\xi^j} = \delta_i^j$. We consider the structure constants of $\g$ and the Gram matrix  given by
\beq
[\xi^i, \xi^j] := c_k^{ij} \xi^k, \quad G^{ij} := \bil{\xi^i}{\xi^j}.
\eeq
Then, under the coordinates $q^i:\g\to \mathbb C$ defined by $q^i(g)= \bil{(g - L_2)}{\xi^i}$ for $g\in \g$, the brackets of $\mathbb{B}$  have the form
\beq
\{q^i(x), q^j(y)\} = G^{ij} \delta'(x-y) - c_k^{ij} q^k(x) \delta(x-y).
\eeq

Let us consider the affine subspace $\overline{\mathcal{Q}} \subset \lop \g$ given by
\[
\overline{\mathcal{Q}} := L_2 - \sum_{i=1}^r v^i(x) \e_{r,i} \subset \lop \g.
\]
Then the subspace of constant loops in $\overline{\Q}$ is transverse to the adjoint orbit $\mathcal{O}_{L_2}$ at $L_2$. Drinfeld and Sokolov proved that $\mathbb{B}$ reduces to $\overline{\mathcal{Q}}$ under the gauge action \eqref{gauge action}, and the reduced local Poisson bracket equals  the second Adler-Gelfand-Dickey bracket \eqref{AGD_bracket} (see Theorem 3.22, \cite{DS}). Moreover, they show that different transversal subspaces to the adjoint orbit yield isomorphic local Poisson brackets. 

In this article, instead of $\overline{\mathcal{Q}}$, we consider the affine loop space
\be 
\mathcal{Q} := L_2 + \lop{\g^f}= L_2 + \sum_{i=1}^r u^i(x) \gamma_i,
\ee
associated with the Slodowy slice $Q$ and we denote by $\mathbb B_2^\Q$ the reduction of $\mathbb{B}$ to $\Q$. 

We now give a more detailed construction of $\mathbb{B}_2^{\mathcal{Q}}$, which is central to our analysis. Let $\mathcal{B}$ denote the space of operators of the form
\begin{equation*}\label{op:DS}
\mathcal{L} = \partial_x + b + L_2, \qquad b \in \lop \bneg, \quad \bneg := \bigoplus_{i \leq 0} \gr_i.
\end{equation*}
This space is invariant under the following gauge action:
\be\label{gauge action}
(\mathrm{w}, \mathcal{L}) \mapsto (\exp{\ad{\mathrm{w}}}) \, \mathcal{L}, \quad \mathrm{w} \in \lop \nneg, \quad \mathcal{L} \in \mathcal{B}, \quad \nneg := \bigoplus_{i \leq -1} \gr_i.
\ee
Then for any $\mathcal{L} \in \mathcal{B}$, there exists a unique element $\mathrm{w} \in \lop \nneg$ such that
\begin{equation*}\label{op:fixing}
\mathcal L^{can}=\partial_x + q + L_2 = (\exp{\ad{\mathrm{w}}}) \mathcal{L}, \quad q+L_2 \in \Q.
\end{equation*}
By expansion, we get the following relation between $q$, $\mathrm w$ and $b$
\be \label{op:fixing1}
q - [\mathrm{w}, L_2] = b - \mathrm{w}_x + [\mathrm{w}, b] + \sum_{i > 0} \frac{1}{(i+1)!} \ad_{\mathrm{w}}^i \big(-\mathrm{w}_x + [\mathrm{w}, b] + [\mathrm{w}, L_2]\big).
\ee
Let us  write 
\begin{equation}
b = \sum_{i=1}^{r} \sum_{I=0}^{i-1} b_I^i(x) \frac{1}{I!} \ad_{L_2}^I \gamma_i, \quad \mathrm{w} = \sum_{i=1}^{r} \sum_{I=0}^{i-2} \mathrm{w}_I^i(x) \frac{1}{I!} \ad_{L_2}^I \gamma_i,
\end{equation}
Then using the Dynkin grading and the relation $\g^f \oplus [\nneg, L_2] = \bneg$, provided by the representation theory of $\sll_2$-triples, we derive recursive equations expressing $u^k(x)$ and $\mathrm{w}_I^i(x)$ as differential polynomials in  $b_J^j(x)$. 

Note that  $u^i(x)$ as  differential polynomials, yield a complete set of generators for the ring $R$ of differential polynomials in $b^j_J(x)$ invariant under the gauge action \eqref{gauge action}, i.e., if $P\in R$, then $P$ can be written as differential polynomial in $u^i(x)$.  By assigning the degree of $\partial_x^k b_J^i$ as $k + i - J$, we find that the generators $u^i(x)$ are  quasihomogeneous  polynomials of degree $i$. For example, let $\phi_i : \g \to \gr_i$ denote the projection map. Then the identity \eqref{op:fixing1} leads to 
\begin{eqnarray}\label{dsterms}
u^1(x) &=& b_0^1(x), \\\nonumber
-[\phi_{-1}(\mathrm{w}), L_2] &=& \phi_0(b) - b_0^1(x)\gamma_1, \\\nonumber
u^2(x) \gamma_2 - [\phi_{-2}(\mathrm{w}), L_2] &=& \phi_{-1}\big(b - \mathrm{w}_x + [\mathrm{w}, b] + \frac{1}{2}[\mathrm{w}, [\mathrm{w}, L_2]]\big).
\end{eqnarray}
The set of functionals $\mathcal{R}$ on $\mathcal{Q}$ is defined as the functionals on $\mathcal{B}$ whose densities belong to the ring $R$. It follows that $\mathcal{R}$ is closed under the Poisson bracket $\mathbb{B}$, resulting in the reduced Poisson bracket $\mathbb{B}_2^{\mathcal{Q}}$.
 Furthermore, the brackets $\{\cdot, \cdot\}_2^\Q$ of $\mathbb{B}_2^\mathcal{Q}$ can be computed using  Leibniz rule:
\begin{equation}\label{leibniz rule}
\{u^\mu(x), u^\nu(y)\}_2^\mathcal{Q} := \frac{\partial u^\mu(x)}{\partial (b_I^i)^{(k)}} \partial_x^k \Bigg( \frac{\partial u^\nu(y)}{\partial (b_J^j)^{(l)}} \partial_y^n \big(\{b_I^i(x), b_J^j(y)\}\big) \Bigg),
\end{equation}
where the entries on the right-hand side are expressed entirely in terms of the densities $u^i(x)$ and their derivatives. By definition, we have 
\begin{eqnarray}\label{lie-poiss-in-b}
\{b_I^i(x), b_J^j(y)\} &=& \frac{1}{\Theta_I^i} \frac{1}{\Theta_J^j} \Big( \bil{\ad_f^J L_j}{\ad_f^I L_i} \partial_x + \bil{b}{[\ad_f^J L_j, \ad_f^I L_i]}\Big) \delta(x-y), \\\nonumber
\Theta_I^i &:=& \bil{\frac{1}{I!} \ad_{L_2}^I \gamma_i}{\ad_f^I L_i} = (-1)^I \frac{(2i-2)!}{(2i-I-2)!}.
\end{eqnarray}

 \bp \label{linear part}
The linear terms of $u^i(x)$ are given by
\be \label{linear term}
\sum_{I=0}^{i-1} \frac{(-1)^I}{I!} \partial_x^I b_I^i.
\ee
In particular, $u^i(x)$ depends linearly on $b_0^i(x)$.
\ep

\begin{proof}
We introduce a spectral parameter $\epsilon$ and set $\mathcal{L}(\epsilon) = \partial_x + \epsilon b + L_2$. Let $\mathrm{w}(\epsilon)$ and $\mathcal{L}^{\text{can}}(\epsilon)$ denote the corresponding operators. At $\epsilon = 0$, we have $\mathcal{L}(0) = \partial_x + L_2$, $\mathrm{w}(0) = 0$, and $\mathcal{L}^{\text{can}}(0) = \mathcal{L}(0)$. Differentiating the relation
\be\label{terms e}
\mathcal{L}^{\text{can}}(\epsilon) = \mathcal{L}(\epsilon) + [\mathrm{w}(\epsilon), \mathcal{L}(\epsilon)] + \frac{1}{2}[\mathrm{w}(\epsilon), [\mathrm{w}(\epsilon), \mathcal{L}(\epsilon)]] + \ldots
\ee
with respect to $\epsilon$ and evaluating at $\epsilon = 0$, we obtain
\begin{eqnarray*}
q'(0) &=& b + [\mathrm{w}(0), \mathcal{L}'(0)] + [\mathrm{w}'(0), \partial_x + L_2] \\
&=& b + [\mathrm{w}'(0), \partial_x + L_2] \\
&=& b - \mathrm{w}_x'(0) + [\mathrm{w}'(0), L_2].
\end{eqnarray*}
Since $[\mathrm{w}'(0), L_2]$ does not contribute to $q'(0)$, the coordinate of $\gamma_i$ satisfies
\beq
(u^i(x))'(0) = b_0^i(x) - (\mathrm{w}_x'(0))_0^i,
\eeq
where we write $\mathrm{w}'(0) = \sum_{i=1}^n \sum_{I>0} (\mathrm{w}'(0))_I^i \frac{1}{I!} \ad_{L_2}^I \gamma_i$. For $I > 0$, the coefficients of $\frac{1}{I!} \ad_{L_2}^I \gamma_i$ yield the recursive relations
\beq
(\mathrm{w}'(0))_{I-1}^i = \frac{1}{I+1}\big(-(\mathrm{w}_x'(0))_I^i + b_I^i(x)\big).
\eeq
These equations lead to the expression in \eqref{linear term}.
\end{proof}

\bc 
The quadratic terms $\ddot u^i(x)$ and $\ddot {\mathrm{w}}_I^i(x)$ of $u^i(x)$ and $\mathrm{w}_I^i(x)$ are recursively determined by
\be \label{quadratic terms}
\ddot q - [\ddot {\mathrm{w}},L_2] = -\mathrm{w}_x + [\mathrm{w}'(0), b] +\frac{1}{2} [\mathrm{w}'(0), [\mathrm{w}'(0), \partial_x + L_2]].
\ee
\ec
\begin{proof} The quadratic terms are given by applying the operator $\dfrac{1}{2}\frac{d^2}{d\epsilon^2}|_{\epsilon=0} $ to equation 
 \eqref{terms e}.
\end{proof}
\subsection{Classical $W$-algebra}\label{classwalg}

In this section, we utilize known results on the Drinfeld–Sokolov reduction associated with nilpotent elements in simple Lie algebras to show that $\mathbb{B}_2^Q$ defines a classical $W$-algebra.  

We consider  the $\sll_2$-triple $\{L_2, h, f\}$  in the special linear Lie algebra $\sll_r$ and the associated affine loop space  \[\widetilde{\mathcal{Q}} := L_2 + \lop{\sll_r^f}=L_2+\sum_{i=2}^r u^i(x)\gamma_i.\] 
Note that the index $i$  runs from 2 to $r$. Then, we perform Drinfeld-Sokolov gauge action in the same manner as  for $\Q$, i.e., by restricting  the procedure and the local Poisson bracket \eqref{Poissbrac} to ${\sll_r}$ (see \cite{DS} for details). This yields a local Poisson bracket $\mathbb B^{\widetilde{\mathcal{Q}}}_2$ on $\widetilde{\mathcal{Q}}$. 

Writing the brackets  $\{.,.\}^{\widetilde{\mathcal{Q}}}_2$  in the form \ref{loc poiss}, the finite dimensional Poisson bracket defined by the matrix $F^{ij}(u),~i,j=2,\ldots,r$ on $L_2+\sll_r^f$ coincides  with the transverse Poisson structure of the Lie-Poisson structure on $\sll_r$. It is known that the symplectic leaves  of the Lie-Poisson structure  coincide with the adjoint orbits.   Since the nilpotent element $L_2$ is regular, its adjoint orbit  is of maximum dimension. Hence, the transverse Poisson bracket is trivial and $\{.,.\}^{\widetilde{\mathcal{Q}}}_2$  admits a dispersionless limit, i.e., $F^{ij}(u)=0$, for $i,j=2,\ldots,r$ (see Proposition 4.4 of \cite{mypaper1}). 

According to the work in \cite{BalFeh1},   $\{.,.\}^{\widetilde{\mathcal{Q}}}_2$  is a classical $W$-algebra. Precisely, 
\begin{eqnarray*}
\{u^2(x), u^2(y)\}_2^{\widetilde{\mathcal{Q}}} &=& c \delta'''(x - y) + 2u^2(x) \delta'(x-y) + z^2_x \delta(x-y),~~c\neq 0\in \mathbb C \\\nonumber
\{u^2(x), u^j(y)\}_2^{\widetilde{\mathcal{Q}}} &=& j u^j(x) \delta'(x-y) + (j-1) u^j_x \delta(x-y),~~j\neq 2. 
\end{eqnarray*}
This leads to the following Proposition. 

\bp \label{classical 1}
The local Poisson bracket $\mathbb{B}_2^\mathcal{Q}$ admits a dispersionless limit.  Moreover, under the following change of coordinates on $Q$
\be \label{coordz}
z^2 = u^2 + \frac{1}{2} (u^1)^2 \bil{\gamma_1}{\gamma_1}, \quad z^i = u^i, \quad i \neq 2,
\ee
the Poisson brackets satisfy the  identities defining a classical $W$-algebra,
\begin{eqnarray}\label{walgebra2}
\{z^2(x), z^2(y)\}_2^\mathcal{Q} &=& c \delta'''(x-y) + 2z^2(x) \delta'(x-y) + z^2_x \delta(x-y), \\\nonumber
\{z^2(x), z^j(y)\}_2^\mathcal{Q} &=& j z^j(x) \delta'(x-y) + (j-1) z^j_x \delta(x-y),~~j\neq 2,
\end{eqnarray}
for some nonzero constant $c$.
\ep
\begin{proof}
We observe that  
\be \label{reductionto sl}
\g = \sll_r \oplus \mathbb{C} \Y_1, \quad \mathcal{Q} = \widetilde{\mathcal{Q}} + u^1(x) \Y_1, \quad [\Y_1, \g] = 0, \quad \bil{\Y_1}{\sll_r} = 0.\ee 
In particular, from the gauge action, the densities  $u^2(x), \ldots, u^r(x)$  are independent of $b_0^1(x)$ but $u^1(x) = b_0^1(x)$. Hence, we get from equation \eqref{leibniz rule}  
\beq
\{u^1(x), u^1(y)\}_2^\Q = \bil{L_1}{L_1} \delta'(x-y) = r \delta'(x-y), \quad \{u^1(x), u^i(y)\}_2^\Q = 0, \quad i \neq 1.
\eeq
Furthermore, \beq \{u^i(x),u^j(y)\}_2^\Q=\{u^i(x),u^j(y)\}_2^{\widetilde{\Q}},~~i,j\neq 1. \eeq
 Thus, for $z^1(x)$, we have
\[
\{z^2(x), z^1(y)\}_2^\mathcal{Q} = \frac{1}{r} u^1(x) \{u^1(x), u^1(y)\}_2^\mathcal{Q} = u^1(x) \delta'(x-y).
\]
This completes the proof.
\end{proof}

\section{Entries of classical $W$-algebra} \label{entriesvalg}

In this section, we investigate how the entries of the classical $W$-algebra $\mathbb{B}_2^\mathcal{Q}$ depend on $u^{r-1}(x)$. Specifically, we aim to identify monomials in $\{u^i(x), u^j(y)\}_2^\mathcal{Q}$ that are proportional to $(u^{r-1}(x))^n$ for some natural number $n$. Proposition \ref{linear part} shows that such monomials arise from terms proportional to $(b_0^{r-1}(x))^n$ in the expansion \eqref{leibniz rule}.

Proposition \ref{classical 1} demonstrates that the Poisson brackets are linear in $u^{r-1}(x)$ when $r = 2$. However, this linearity does not hold for $r \geq 3$. Hence, for this section until the remainder of Section 8, we assume $r \geq 3$.

We begin by analyzing the quasihomogeneity of $\mathbb{B}_2^\mathcal{Q}$. Assigning degrees as $\deg \partial_x^j u^i = i + j$ and $\deg \partial_x^k b_J^i = k + i - J$, we proceed with the following definitions.

\bd
A matrix $A_{IJ}^{ij}(b)$ with entries that are differential polynomials in the densities $b_S^s(x)$ is said to be homogeneous of degree $n$ if each entry $A_{IJ}^{ij}(b)$ is a quasihomogeneous polynomial of degree $\deg b_I^i + \deg b_J^j + n$. Similarly, a matrix $A^{ij}(u)$ with entries that are differential polynomials in $u^k(x)$ is homogeneous of degree $n$ if each entry $A^{ij}(u)$ is a quasihomogeneous polynomial of degree $\deg u^i + \deg u^j + n$.
\ed

The local Poisson bracket $\mathbb{B}$ on $\lop \g$ is the sum of
\be \label{general Poiss bracket}
\{b_I^i(x), b_J^j(y)\}^{[0]} := A_{IJ}^{ij}(b) \delta'(x-y), \quad \{b_I^i(x), b_J^j(y)\}^{[-1]} := B_{IJ}^{ij}(b) \delta(x-y).
\ee
The term $\{\cdot, \cdot\}^{[-1]}$ corresponds to the Lie-Poisson bracket on $\lop \g$ restricted to $\lop \bneg$. The Dynkin grading of $\g$ implies that the matrix $B_{IJ}^{ij}(b)$ is homogeneous of degree $-1$. On the other hand, $\{\cdot, \cdot\}^{[0]}$ is defined by restricting the invariant bilinear form $\bil{\cdot}{\cdot}$ to $\lop \bneg$. Consequently, the matrix $A_{IJ}^{ij}(b)$ is constant, with nonzero entries occurring only when $i - I = 0 = j - J$. Applying the Leibniz rule yields
{\small
\begin{eqnarray}\label{libniz expand}
\{u^\mu(x), u^\nu(y)\}_2^\mathcal{Q} &=& \frac{\partial u^\mu(x)}{\partial (b_I^i)^{(l)}} \partial_x^l \Bigg( \frac{\partial u^\nu(y)}{\partial (b_J^j)^{(h)}} \partial_y^h \big(A_{IJ}^{ij}(b) \delta'(x-y) + B_{IJ}^{ij}(b) \delta(x-y)\big) \Bigg) \\ \nonumber
&=& \sum_{h \geq \alpha \geq 0} \sum_{l \geq \beta \geq 0} (-1)^h {h \choose \alpha} {l \choose \beta} \frac{\partial u^\mu(x)}{\partial (b_I^i)^{(l)}} \Bigg( B_{IJ}^{ij}(b) \Bigg( \frac{\partial u^\nu(x)}{\partial (b_J^j)^{(h)}} \Bigg)^{(\alpha)} \Bigg)^{(\beta)} \delta^{(h+l-\alpha-\beta)}(x-y) \\ \nonumber
&+& \sum_{h \geq \alpha \geq 0} \sum_{l \geq \beta \geq 0} (-1)^h {h \choose \alpha} {l \choose \beta} \frac{\partial u^\mu(x)}{\partial (b_I^i)^{(l)}} \Bigg( A_{IJ}^{ij}(b) \Bigg( \frac{\partial u^\nu(x)}{\partial (b_J^j)^{(h)}} \Bigg)^{(\alpha)} \Bigg)^{(\beta)} \delta^{(h+l-\alpha-\beta+1)}(x-y).
\end{eqnarray}
}

We write the brackets $\{u^i(x), u^j(y)\}_2^\mathcal{Q}$ of $\mathbb{B}_2^\mathcal{Q}$ in the form \eqref{loc poiss pre} and study the matrix $\Omega_2^{ij}(u)$. From Proposition  \ref{classical 1}, the matrix $F_2^{ij}(u) = 0$.

\bp \label{exponent in u}
The maximum exponent of $u^{r-1}(x)$ in the expansions of $\{u^i(x), u^j(y)\}_2^\mathcal{Q}$ is 2. Furthermore, the term with exponent 2 may appear only in the entry $\Omega_2^{rr}(u)$.
\ep

\begin{proof}
We collect the coefficients of $\delta^{(m)}(x-y)$ in the expansion \eqref{libniz expand} and write
\beq
\{u^i(x), u^j(y)\}_2^\mathcal{Q} = \sum_{m \geq 0} T_m^{ij}(u,u_x,u_{xx},\ldots) \delta^{(m)}(x-y).
\eeq
We claim that, for any $m$, the matrix $T_m^{ij}$ is homogeneous of degree $-(m+1)$. Indeed, the degree of the coefficient of $\delta^{(h+l-\alpha-\beta)}(x-y)$ in \eqref{libniz expand} is, omitting writing the independent variable $x$,
\begin{eqnarray*}
\deg u^\mu - \deg b_I^i - l + \deg u^\nu - \deg b_J^j - h &+ & \alpha + \deg b_I^i + \deg b_J^j - 1 + \beta\\ &=& \deg u^\mu + \deg u^\nu - (l + h - \alpha - \beta + 1).
\end{eqnarray*}
Similarly, using the properties of the matrix $A_{IJ}^{ij}(b)$, the degree of the coefficient of $\delta^{(h+l-\alpha-\beta+1)}(x-y)$ is $\deg u^\mu + \deg u^\nu - (l + h - \alpha - \beta + 2)$. As
\[
\deg T_m^{ij} = i + j - m - 1 \leq 2r - m - 1.
\]
and $\deg u^{r-1} = r-1$, the maximum exponent of $u^{r-1}(x)$ in $T_m^{ij}$ is 2. In particular, $(u^{r-1}(x))^2$ can appear only if $m = 1$ or $m = 0$. Thus, considering the forms $\{\cdot,\cdot\}_2^{[k]}$ of the expansion \eqref{loc poiss pre}, it follows that $\Omega_2^{rr}(u)$ is the only entry of the local Poisson bracket $\mathbb{B}_2^\mathcal{Q}$ that may contain a monomial proportional to $(u^{r-1}(x))^2$.
\end{proof}

\bl \label{metric:z1}
The matrix $\Omega^{ij}_2(u)$ is homogeneous of degree $-2$. Consequently, it is a lower antidiagonal matrix with respect to $u^{r-1}$, i.e., $\partial_{u^{r-1}} \Omega^{ij}_2(u) = 0$ for $i + j < r+1$. Moreover, $\Omega^{1i}_2(u) = r \delta^{1i}$, and the coefficient of $(u^{r-1})^2$ in $\Omega^{rr}_2(u)$ equals $\frac{r-1}{r}$.
\el

\begin{proof}
We need only to determine the coefficient of $(u^{r-1}(x))^2$, other statements follow directly from Propositions \ref{exponent in u} and \ref{classical 1}. We begin by  analyzing the occurrence of $(b_0^{r-1}(x))^2$ in the expansion of $\{u^r(x), u^r(y)\}$. Observe that the terms in $u^r(x)$ involving $b_0^{r-1}(x)$ take the form $b_0^{r-1}(x) S(b)$, where $S(b)$ is a polynomial of degree 1. They  are part of the quadratic terms of $u^r(x)$ which are given by equation \eqref{quadratic terms}. Form this equation, we note that $\ddot{\mathrm{w}}$ and $\mathrm w'(0)$ belong to the image of $\ad_{L_2}$. Thus, they do not depend on $b_0^{r-1}(x)$. Hence, to find $S(b)$, we  consider only the term $[{\mathrm{w}}'(0), b]$ of equation \eqref{quadratic terms}. Then applying $\bil{L_r}{\cdot}$ to both sides,  leads to 
\be \label{coefficient of u1}
S(b) = \bil{[\gamma_{r-1}, L_r]}{\mathrm{w}'(0)}.
\ee
Furthermore, from Leibniz rule,  $b_0^{r-1}(x) \{S(b(x)), u^r(y) - b_0^{r-1}(y) S(b(y))\}$ can also yield a quadratic term in $b_0^{r-1}(x)$. This arises from the Lie-Poisson bracket $\{\cdot, \cdot\}^{[-1]}$ on $\lop \g$. Analyzing this using grading and quasihomogeneity, we may find an additional quadratic term from $b_0^{r-1}(x) \{S(b), \partial_y b_1^r(y)\}^{[-1]}$. In conclusion, the coefficient of $(u^{r-1}(x))^2$ in $\{u^r(x), u^r(y)\}_2^\Q$ equals the coefficient of $(b_0^{r-1}(x))^2$ appearing in the expansion of
\be \label{coefficient of u}
\{\widetilde{u}^r(x), \widetilde{u}^r(y)\}, \quad \widetilde{u}^r(x) := b_0^{r-1}(x) S(b) - \partial_y b_1^r.
\ee
To find the value of $S(b)$, we note that  
\[
[\gamma_{r-1}, L_r] = \frac{1}{2}[\e_{r-1,1} + \e_{r,2}, \e_{1,r}] = \frac{1}{2}(\e_{r-1,r} - \e_{1,2}).
\]
Thus, $S(b)$ depends only on the restriction $\widetilde{\mathrm{w}}$ of $\mathrm{w}'(0)$ to the vector space spanned by $\e_{2,1} \in \gr_{-1}$ and $\e_{r,r-1} \in \gr_{-1}$. From equations  \eqref{dsterms}, we only need the restriction $\widetilde{b}$ of $b$ to $\lop{\h}$. Thus, introducing 
\[
\widetilde{\mathrm{w}} = \mathrm{w}^1(x) \e_{2,1} + \mathrm{w}^2(x) \e_{r,r-1}, \quad \widetilde{b} = a^1(x) L_1 + \sum_{i=1}^{r-1} a^{i+1}(x) (\e_{i,i} - \e_{i+1,i+1}),
\]
and using
\[
[\widetilde{\mathrm{w}}, L_2] = \mathrm{w}^1(x) (\e_{2,2} - \e_{1,1}) + \mathrm{w}^2(x) (\e_{r,r} - \e_{r-1,r-1}).
\]
It  follows that
\be
\mathrm{w}^1(x)= a^2(x), \quad \mathrm{w}^2(x) = a^r(x), \quad S(b) = \frac{1}{2}(-a^2(x) + a^r(x)).
\ee
We now compute the Poisson brackets for the coordinates involved in the expression of $\widetilde{u}^r$. Recall that $b_1^r$ is the coefficient of $\ad_{L_2} K_r = \e_{r-1,1} - \e_{r,2}$. Let $e_1^*, \ldots, e_{r+1}^*$ denote the dual basis of $\gamma_1, \e_{1,1} - \e_{2,2}, \ldots, \e_{r-1,r-1} - \e_{r,r}, \e_{r-1,1} - \e_{r,2}$ under $\bil{\cdot}{\cdot}$. Note that   $e_2^*,\ldots, e_{r}^*$ can be considered as the fundamental weights of the Lie algebra $\sll_{r}$. Specifically, from \cite{Humph}, we get 
\[
e_1^* = L_1, \quad e_{r+1}^* = \frac{1}{2} (\e_{1,r-1} - \e_{2,r}), \quad e_j^* = \sum_{i=1}^{j-1} \e_{i,i} - \frac{j-1}{r} L_1, \quad j = 2, \ldots, r.
\]
Direct computation yields
\beq
\bil{e_2^*}{e_2^*} = \bil{e_r^*}{e_r^*} = \frac{r-1}{r}, \quad \bil{e_2^*}{e_r^*} = \frac{1}{r}, \quad [e_2^*, e_{r+1}^*] = \frac{1}{2} \e_{1,r-1}, \quad [e_r^*, e_{r+1}^*] = -\frac{1}{2} \e_{2,r}.
\eeq
This leads to Poisson brackets
\begin{eqnarray*}
\{a^2(x), a^r(y)\} &=& \frac{1}{r} \delta'(x-y), \\
\{a^2(x), a^2(y)\} &=& \{a^r(x), a^r(y)\}_2 = \frac{r-1}{r} \delta'(x-y), \\
\{-a^2(x) + a^r(x), b_1^r(y)\} &=& -\frac{1}{2} b_0^{r-1}(x) \delta(x-y).
\end{eqnarray*}
Using the expansion \eqref{libniz expand}, we arrive to 
\begin{eqnarray*}
\{\widetilde{u}^r(x), \widetilde{u}^r(y)\} &=& \frac{1}{4} (b_0^{r-1}(x))^2 \big(\bil{e_r^*}{e_r^*} - 2 \bil{e_r^*}{e_2^*} + \bil{e_2^*}{e_2^*}\big) \\
&-& \frac{1}{2} b_0^{r-1}(x) \big(\{-a^2(x) + a^r(x), b_1^r(y)\}\big) + \text{terms free of } (b_0^{r-1}(x))^2.
\end{eqnarray*}
Thus, the coefficient of $(u^{r-1}(x))^2$ is
\be
\frac{r-1}{2r} - \frac{1}{2r} + \frac{1}{2} = \frac{r-1}{r}.
\ee
\end{proof}

Finally, the following corollary shows that the form of some entries of the local Poisson brackets remain invariant under quasihomogeneous changes of coordinates. These entries are observed in the definition of classical $W$-algebra (see \eqref{walgebra2}).

\bc\label{change of coord Walg}
Suppose that the brackets of $\mathbb B_2^Q$ in  some coordinates $(z^1, \ldots, z^r)$ on $Q$ give 
\[
\Omega_2^{2j}(z) = j z^j, \quad \Gamma_{2,k}^{2j}(z) = (j-1) \delta_k^j, ~j=1,\ldots,r.
\]
Then under a change of coordinates on $Q$ of the form
\[
s^2 = z^2, \quad s^i = H^i(z^1, \ldots, z^n), \quad i \neq 2,
\]
where $H^i$ is a quasihomogeneous polynomial of degree $i$ when $\deg z^j=j$, we have 
\[
\Omega_2^{2j}(s) = j s^j, \quad \Gamma_{2,k}^{2j}(s) = (j-1) \delta_k^j, ~j=1,\ldots,r.
\]

\ec

\begin{proof}

Let us  introduce  the Euler vector field
\beq
E':=\sum_i i z^i { \partial_{z^i}}.
\eeq
Then the formula for change of coordinates   gives
\beq
\O^{2j}(s)={\partial_{z^i} s^2 } {\partial_{z^k} s^j}~ \O^{ik}(z)= E'(s^j)=j s^j.
\eeq
 For    $\Gamma^{2j}_{k}(s)$, the change of coordinates leads to 
\begin{eqnarray*}
\Gamma^{2j}_{2,k} d s^k&=& \Big({\partial_{z^i} s^2 } {\partial_{z^c}\partial_{z^l} s^j} \O_2^{i l}(z)+ {\partial_{z^i} s^2} {\partial_{z^l} s^j  } \Gamma^{i l}_{2,c}(z)\Big) d z^c,
\\\nonumber &=& \Big( E' ({\partial_{z^c} s^j  })+  {\partial_{z^l} s^j  } \Gamma^{2l}_{2,c}\Big) d z^c,\\\nonumber
&=& \Big( (j-c){\partial_{z^c} s^j  }+  (c-1){\partial_{z^c} s^j  } \Big) d z^c=(j-1) {\partial_{z^c} s^j } d z^c =(j-1) d s^j.
\end{eqnarray*}
\end{proof}
\subsection{Antidiagonal entries}

In this section, we compute the coefficients of $u^{r-1}(x)$ in the antidiagonal entries $\O^{k,r-k+1}_2(u)$. These coefficients are constants since $\deg \O^{k,r-k+1}_2(u)=k+(r-k+1)-2=r-1$. From  Proposition \ref{linear part}  and  properties of the matrix $A^{ij}_{IJ}(b)$,  we need only to consider the appearance of  $b_0^{r-1}(x)$ in the following coefficient of $\delta'(x-y)$
\beq 
\sum_{i,j,I,J}\sum_{h,l}(-1)^h (l+h) B_{IJ}^{ij}(b)\dfrac{\partial u^k(x) }{ \partial (b_{I}^{i})^{(l)} }\Big(\dfrac{\partial u^{r-k+1}(x) }{ \partial (b_{J}^{j})^{(h)} }\Big)^{h+l-1}.
\eeq 
Let us define the structure constants $\Delta_{j}^{Jt}$ by 
\be\label{constss} [\gamma_{r-1}, \ad_f^J L_j]=\sum_t \Delta_{j}^{Jt} \dfrac{1}{ m!}\ad_{L_2}^m \ga_t;~~ m=t+j-J-r\geq 0.\ee
Here, the integer $m $ is  determined by the Dynkin grading of $\g$. Then applying $\bil {\ad_f^I L_i}{\cdot}$, we get the  coefficient of $b_0^{r-1}(x)$ in the entry $B_{IJ}^{ij}(b)$ equals
\beq \dfrac{1}{\Theta_{J}^{j} } \delta^{Im}\delta^{it} \Delta_{j}^{Jt}\delta(x-y)=  {\Delta_{j}^{Ji} \over \Theta_{J}^{j} }  \delta(x-y),~~~I=i+j-J-r
\eeq
where $\Theta_{J}^{j}= (-1)^J \frac{(2j-2)!}{(2j-J-2)!}$. We conclude that  the coefficient of $u^{r-1}$ in $\O_2^{k,r-k+1}(u)$ is included in the expression
\begin{eqnarray}\label{g1:matrix}
\f^{k}&=&\sum_{i,J}\sum_{h,l}(-1)^h (l+h) {\Delta_{j}^{Ji} \over \Theta_{J}^{j} }{\partial u^k(x) \over \partial (b_{I}^{i})^{(l)} }\Big({\partial u^{r-k+1}(x) \over \partial (b_{J}^{j})^{(h)} }\Big)^{h+l-1},~~I=i+j-J-r.
\end{eqnarray}

\bl\label{metric:z2}
The coefficient of $u^{r-1}$ in  the entry 
 $\O^{k,r-k+1}_2(u)$  equals  $r-1$ for $k=2\ldots,r-1$ and  $\O^{1r}(u)=\O^{r1}(u)=0$.
\el
\begin{proof}
 Note that $\O^{1r}_2(u)=\O^{r,1}_2(u)=0$ follows from the proof of Proposition \ref{classical 1}. Thus, we only consider $\f^k$, $k\neq 1,r$. To get a constant coefficient from $\f^k$, we must have $h+l-1=0$ and $u^k(x)$ is linear in $(b^i_I)^{(l)}$ and $u^{r-k+1}(x)$ is linear in $(b^j_J)^{(h)}$.  Consider the case $h=0$ and $l=1$. It follows from Proposition \ref{linear part}  that $j=r-k+1, J=0, i=k$ and  $I=1$. Thus, we have  
\beq 
{\partial u^{r-k+1}(x) \over \partial (b_{J}^{j})^{(h)} }=1,~~ {\partial u^k(x) \over \partial (b_{I}^{i})^{(l)} }=-1. ~~
\eeq 
and we get from \eqref{constss} the constant 
\beq
-{\Delta_{r-k+1}^{0u}\over \Theta^{r-k+1}_0}=-\Delta_{r-k+1}^{0u}={1\over 2(k-1)}\bil {ad_f L_k}{[\gamma_{r-1},L_{r-k+1}]}.
\eeq 
A  similar analysis for the case  $h=1$ and $l=0$, we get the value  
 \[{1\over 2(r-k)}\bil {ad_f L_{r-k+1}}{[\gamma_{r-1},L_{k}]}.\] 
We note that 
\bea \nonumber
{ad_f L_{r-k+1}}&=&  k(r-k)\Big(\e_{k+1,r}-\e_{1,r-k}\Big)+\sum_{i=1}^{k-1}\Big(i(r-i)-(i+r-k)(k-i)\Big)\e_{i+1,i+r-k},\\ \nonumber
{[\gamma_{r-1},L_{k}]} &=& \frac{1}{2}(\e_{r-1,k}+\e_{r,1+k}-\e_{r-k,1}-\e_{r-k+1,2}).
\eea 
Hence,  
\bea \nonumber 
\bil {ad_f L_{r-k+1}}{[\gamma_{r-1},L_{k}]}&=&2(r-k)+[(k-1)(r-k+1)-(r-1)]-[(r-1)-(r-k+1)(k-1)]\\\nonumber
&=&2(r-k)(k-1). 
\eea 
Therefore,
\beq 
\partial_{u^{r-1}}\Omega^{k,r-k+1}_2={1\over 2(k-1)}\bil {ad_f L_k}{[\gamma_{r-1},L_{r-k+1}]}+{1\over 2({r-k})}\bil {ad_f L_{r-k+1}}{[\gamma_{r-1},L_{k}]}=r-1. 
 \eeq 

\end{proof}

\section{Invariant coordinates}
In this section, we employ the invariant polynomials of $\g$ under the adjoint group action to establish coordinates for Slodowy slice $Q$. Then we examine the change of the entries of $\mathbb{B}^Q_2$ under these coordinates.

 Recall that by  Chevalley's theorem, the ring of  invariant polynomials under the adjoint group action on $\g$ is generated by $r$ homogeneous polynomials with degrees $1,2,\ldots,r$. Moreover, $ P_i=\frac{1}{i}\mathrm{Tr}(g^i),~ g\in \g,~i=1,\ldots,r$  form a complete set of homogeneous  generators with  $\deg P_i=i$. Let $z^i$ be the restriction of $P_i$ to $Q$, i.e.,  
 \[ z^i=\dfrac{1}{i} Tr(g^i),~~g\in Q\]
 Then it follows from Section 2.5 of \cite{sldwy2} that $z^i$ is a quasihomogeneous polynomial of degree $i$ in the coordinate $(u^1,\ldots,u^r)$ with $\deg u^j=j$. 

\bp \label{nice coordinates1} The
  functions $(z^1,\ldots,z^{r})$ define  coordinates on $Q$ and have the form
 \be \label{norm-coord eq}
 z^i=  \left\{
  \begin{array}{ll} u^1, &i=1,\\
  u^2+\dfrac{1}{2r} (u^1)^2,& i=2,\\
  u^{i}+\widetilde z^{i}(u),& i=3,\ldots,r-1,\\
   u^{r}+ \dfrac{(r-1)}{r}u^1 u^{r-1}+\widetilde z^{r}(u), & i=r.
  \end{array}
\right.
\ee
Here,  \be \label{constrain1} \dfrac{\partial \widetilde z^i}{\partial u^{r-1}}=\dfrac{\partial \widetilde z^{i}}{\partial u^{i}}=0, ~i\geq 3.\ee   
 \ep
 \begin{proof}
      The forms of  $z^1$ and $z^2$ are obtained by direct computations and the conditions \eqref{constrain1} follow from quasihomogeneity. Let us assume $i>2$. Let $q(\epsilon)\in Q$ be the element given by replacing $u^i\to\epsilon u^i$. Then the linear term of  $z^i$ is given by \beq \dfrac{d}{d\epsilon}|_{\epsilon=0}   z^i(q(\epsilon))=\Tr\Big(q'\circ L_2^{i-1}\Big)=\bil {q'}{L_{i}}=u^{i},~i=1,\ldots,r.\eeq
 Thus, $(z^1,\ldots,z^r)$ are coordinates on $Q$. The quadratic form of $z^{r}$  is obtained from  the evaluation  
 \beq 
 \dfrac{1}{2}\frac{d^2}{d\epsilon^2}|_{\epsilon=0}  z^{r}=\Tr\Big( (q')^2L_{r-1}+\dfrac{1}{2}\sum_{j=1}^{r-3} 
  q' L_{j+1} q'L_{r-j-1}\Big).
  \eeq
  We write 
  \beq q'=u^r\e_{r,1}+\frac{1}{2}u^{r-1}(\e_{r-1,1}+\e_{r,2})+\frac{1}{r}u^1\sum_{k=1}^r\e_{k,k}.\eeq 
  Then 
  \beq q'L_{j+1}=u^r\e_{r,j+1}+\frac{1}{2}u^{r-1}(\e_{r-1.j+1}+\e_{r,j+2})+\frac{1}{r}u^1\sum_{k=1}^{r-j}\e_{k,j+k}.\eeq
We get $q'L_{r-j-1}$ by replacing $j$ by $r-j-2$ in the formula above. Then the result follows from the values  
\beq \Tr (q' L_{j+1} q'L_{r-j-1})=\Tr\big((q')^2L_{r-1}\big)=\frac{2}{r}u^1 u^{r-1}, ~ j=1,\ldots,r-3.\eeq
\end{proof}

\bl \label{entries in z}
In the coordinates $(z^1,\ldots,z^r)$, the maximum power of $z^{r-1}(x)$ in the corresponding brackets of $\mathbb B_2^\Q$ is 2 and it appears only on the entry $\Omega_2^{rr}(z)$ with coefficient ${(r-1)}$. Moreover, the matrix   $\Omega^{ij}_2(z)$  is a lower antidiagonal for the coordinate $z^{r-1}(x)$, i.e., $\partial_{z^{r-1}}\Omega^{i,j}(z)=0$ for $i+j<r+1$. In addition, \[\Omega^{11}_2(z)=r, ~\O_2^{2i}(z)=iz(x), ~\Gamma^{2i}_{2,k}(z)= (i-1)\delta_{k}^i,~~\partial_{z^{r-1}}\Omega_2^{i,r-i+1}(z)=
  r-1, ~ i=1,\ldots,r.\]
\el
\begin{proof}
 From Proposition \ref{exponent in u}, it follows that the maximum  power of $z^{r-1}(x)$ is $2$ and it may appears only in the matrix entry $\Omega^{rr}_2(z)$.   We treat $\O_2^{ij}(u)$ as $(2,0)$ tensor on $Q$. Then to find the coefficient of $(z^{r-1})^2$  in $\Omega_2^{rr}(z)$, it is enough to compute the values of the one form  \[A:= du^r+\dfrac{r-1}{r}(u^1 du^{r-1}+ u^{r-1}du^1).\] under the tensor $\O_2^{ij}(u)$. This leads to the coefficient of $(u^{r-1})^2$ in the expression 
 \[\O^{rr}_2(u)+\frac{r-1}{r} (u^{r-1})^2\O^{11}_2.\]
 which is $r-1$. Similar computations by evaluating the one form $A$ with $dz^1=du^2$ leads to the coefficient of $z^{r-1}$ in $\O_2^{1r}(z)$.
To find the coefficient of $z^{r-1}$ in   $\Omega_2^{i,r-i+1}(z)$ for $i\neq 1,r$, we similarly use 
 \[ dz^i=du^i+\sum_{k<i} g_k^i(u) du^k, ~\deg g_k^i=i-k. \] 
Then  \[\O_2(dz^i,dz^{r-i+1})=\O^{i,r-i+1}_2(u)+\sum_{k+j<r+1} T_{kj}^i(u)\O^{kj}_2(u),\]
and the terms in the summations do not depends on $u^{r-1}$ by quasihomogeneity and Lemma \ref{metric:z1}.  The remaining statements follow from  Propositions \ref{classical 1} and  Corollary \ref{change of coord Walg}.
\end{proof}

\section{A bihamiltonian structure on $\Q$}

In this section, we introduce an alternative set of coordinates $(s^1, \ldots, s^r)$ for $Q$, chosen to ensure that the  brackets of $\mathbb{B}_2^\mathcal{Q}$ are at most linear on $s^{r-1}(x)$. This enables us to define a local Poisson bracket $\mathbb{B}_1^\mathcal{Q}$ compatible with $\mathbb{B}_2^\mathcal{Q}$.

\bt \label{coordins}
Under the quasihomogeneous polynomial change of coordinates
\be\label{coords}
s^i =
\begin{cases}
z^i, & i \neq r, \\
\dfrac{r-1}{(r-1) + \alpha r} \big(z^r + \alpha z^1 z^{r-1}\big), & i = r,
\end{cases}
\ee
on $Q$, where $\alpha$ satisfies the equation
\be \label{quadratic eqn}
r \alpha^2 + 2(r-1) \alpha + (r-1) = 0,
\ee
the Poisson bracket $\mathbb{B}_2^\mathcal{Q}$ is at most linear in $s^{r-1}$. Furthermore, the following identities hold:
\[
\Omega_2^{11}(s) = r, \quad \Omega_2^{2i}(s) = i s, \quad \Gamma_{2,k}^{2i}(s) = \delta_k^i (i-1), \quad \partial_{s^{r-1}} \Omega_2^{i, r-i+1}(s) = r-1, \quad i = 1, \ldots, r.
\]
\et

\begin{proof}
Similar to  Lemma \ref{entries in z}, the coefficient of $(s^{r-1})^2$ in the entry $\O^{rr}_2(s)$ is proportional to the coefficient of $(z^{r-1})^2$ in the expression
\[
\Omega_2^{rr}(z) + 2 \alpha z^{r-1} \Omega_2^{r1}(z) + \alpha^2 (z^{r-1})^2 \Omega_2^{11}(z).
\]
This leads to the quadratic expression in equation \eqref{quadratic eqn}. The remainder of the theorem follows by applying the change of coordinates \eqref{coords} to the entries of $\mathbb{B}_2^\mathcal{Q}$.
\end{proof}

We fix the notation $(s^1, \ldots, s^r)$ for the coordinates introduced in Theorem \ref{coordins}. Observe that the discriminant of equation \eqref{quadratic eqn} is negative, yielding two complex conjugate values for $\alpha$. However, the results in this paper are independent of the specific choice of $\alpha$. The Poisson brackets corresponding to different values of $\alpha$ are related by taking the complex conjugate.

The following theorem provides a local Poisson bracket $\mathbb{B}_1^\mathcal{Q}$ on $\mathcal{Q}$ that is compatible with $\mathbb{B}_2^\mathcal{Q}$.

\bt \label{newbih}
The Lie derivative
\be
\mathbb{B}_1^\mathcal{Q} = \Lie_{\partial_{s^{r-1}(x)}} \mathbb{B}_2^\mathcal{Q}
\ee
defines a nontrivial local Poisson bracket on $\mathcal{Q}$ compatible with $\mathbb{B}_2^\mathcal{Q}$.
\et

\begin{proof}
From  Theorem \ref{coordins}, $\mathbb{B}_2^\mathcal{Q}$ is at most linear in $s^{r-1}(x)$ and explicitly depends on $s^{r-1}(x)$. Using  Corollary \ref{compatible poiss}, it follows that   $\Lie^2_{\partial_{s^{r-1}(x)}} \mathbb{B}_2^\mathcal{Q}=0$. Then Proposition \ref{serg} implies that $\mathbb{B}_2^\mathcal{Q}$ and $\mathbb{B}_1^\mathcal{Q}$ are compatible local Poisson brackets. 
\end{proof}

 \bx \label{gl3pb}
We verify the results for Lie algebra ${\mathfrak{gl}}_3$. For convenience, here and in the coming examples, we use superscripts for indices, we suppress the dependence on the independent variable $x$ and we write $\{u_i,u_j\}$ and $\delta$ for $\{u_i(x),u_j(y)\}$ and $\delta(x-y)$. 

The elements of $Q$ have the form
\beq 
Q=\left(
\begin{array}{ccc}
 \frac{1}{3}u_1 & 1 & 0 \\
 \frac{1}{2}u_2 & \frac{1}{3}u_1 & 1 \\
 u_3 & \frac{1}{2}u_2 & \frac{1}{3}u_1 \\
\end{array}
\right)
\eeq 
We write 
\beq L_2+b= \left(
\begin{array}{ccc}
 \frac{1}{3}b_1+\frac{1}{2}b_5+\frac{1}{2}b_6 & 1 & 0 \\
 \frac{1}{2}b_2+b_4 & \frac{1}{3}b_1-b_6 & 1 \\
 b_3 & \frac{1}{2}b_2-b_4 & \frac{1}{3}b_1-\frac{1}{2}b_5+\frac{1}{2}b_6 \\
\end{array}
\right)
\eeq
 and 
\beq \w=\left(
\begin{array}{ccc}
 0 & 0 & 0 \\
 \frac{1}{2}\w_2+\w_4 & 0 & 0 \\
 \w_3 & \frac{1}{2}\w_2-\w_4 & 0 \\
\end{array}
\right).
\eeq 
Then equation  \eqref{op:fixing1} leads to the solutions
\begin{eqnarray*}
\w_2 &=& b_5,~~\w_3= b_4-\frac{1}{2}b_6'+\frac{1}{2}b_5 b_6,~~\w_4= \frac{1}{2}b_6,\\\nonumber u_1&=& b_1,~u_2=
   b_2-b_5'+\frac{1}{4}b_5^2+\frac{3}{4} b_6^2,\\\nonumber u_3&=& b_3-b_4'-\frac{1}{4} b_6 b_5'-\frac{3}{4} b_5
   b_6'+\frac{1}{2}b_6''-\frac{1}{4}b_6^3+\frac{1}{4} b_5^2 b_6-\frac{1}{2}b_2 b_6+b_4 b_5.       
\end{eqnarray*}
The brackets of the local Poisson structure $\mathbb B$ on $\lop\g$ restricted to $b$ reads 
 \beq
\left(
\begin{array}{cccccc}
 3 \delta ' & 0 & 0 & 0 & 0 & 0 \\
 0 & 0 & 0 & b_3 \delta  & b_2 \delta  & 2 b_4 \delta  \\
 0 & 0 & 0 & 0 & 2 b_3 \delta  & 0 \\
 0 & -b_3 \delta  & 0 & 0 & b_4 \delta  & \frac{ 1}{2}b_2 \delta \\
 0 & -b_2 \delta  & -2 b_3 \delta  & -b_4 \delta  & 2 \delta ' & 0 \\
 0 & -2 b_4 \delta  & 0 & -\frac{1}{2}b_2 \delta  & 0 & \frac{2 }{3}\delta ' \\
\end{array}
\right)
 \eeq
 The nonzero brackets  of $\mathbb B_2^\Q$  are 
\begin{eqnarray}
 \{u_1 ,u_1\}_2^Q&=&3 \delta ' ,~~
     \{u_2 ,u_2\}_2^\Q=  -2 \delta ^{'''}+2 u_2 \delta '+u_2'\delta  ,~~
      \{u_2 ,u_3\}_2^\Q= 3 u_3 \delta '+2 u_3'\delta ,  \\\nonumber
\{u_3 ,u_3\}_2^\Q&=& \frac{1}{6}\delta ^{(5)}-\frac{5}{6}u_2 \delta ^{'''} -\frac{5}{4} u_2'\delta '' -\frac{3}{4} u_2''\delta '
   +\frac{2}{3} u_2^2 \delta '+\frac{2}{3}u_2 u_2' \delta  -\frac{1}{6}u_2{}^{'''} \delta.  
\end{eqnarray}
 We fix $\alpha=\frac{1}{3} \left(-2+i \sqrt{2}\right)$ as a solution of equation \eqref{quadratic eqn}. Then the coordinates $s_i$  are given by  
\[s_1=u_1,~~s_2=u_2+\frac{u_1^2}{6},~~s_3=-i \sqrt{2} u_3+\frac{1}{27} \left(3+2 i \sqrt{2}\right) u_1^3+\frac{2}{3} u_2 u_1.\]
The local Poisson brackets read
\begin{eqnarray}\label{Poiss gl3} 
 \left\{s_1 ,s_1 \right\}_2^\Q &=& 3 \delta ',~ \left\{s_1 ,s_2 \right\}_2^\Q= s_1 \delta '+\delta  s_1', \\\nonumber \left\{s_1 ,s_3 \right\}_2^\Q &=&(\frac{2}{3} i \sqrt{2} s_1^2 +\frac{2}{3} s_1^2 +2 s_2) \delta '+(\frac{4}{3} i \sqrt{2}   s_1 s_1'+\frac{4}{3}  s_1 s_1'+2  s_2')\delta, \\\nonumber 
 \left\{s_2 ,s_2 \right\}_2^\Q &=& -2 \delta ^{(3)}+2 s_2 \delta '+\delta  s_2' ,\\\nonumber
 \left\{s_2 ,s_3 \right\}_2^\Q&=& -\frac{4}{3} \delta ^{(3)} s_1-4 \delta '' s_1'-(4  s_1''-3
   s_3 )\delta '+(2   s_3'-\frac{4}{3}   s_1{}^{(3)})\delta , \\\nonumber 
  \left\{s_3 ,s_3 \right\}_2^\Q&=& -\frac{1}{3}\delta ^{(5)}+(\frac{5}{3}  s_2-\frac{7}{6}  s_1^2)\delta ^{(3)}+(\frac{5}{2} 
   s_2'-\frac{7}{2} s_1 s_1')\delta '' \\\nonumber & & +(\frac{3}{2}  s_2''-\frac{19}{6} s_1  s_1''-\frac{1}{3} s_1^4 +\frac{8}{9} i \sqrt{2} s_2 s_1^2 -\frac{4}{9} s_2 s_1^2 +4 s_3 s_1 -\frac{1}{2} \left(s_1'\right){}^2) \delta '\\\nonumber & & +\big(\frac{4}{9} i \sqrt{2}   s_1^2 s_2'-\frac{1}{3}   s_1' s_1''-\frac{2}{3}   s_1^3 s_1'-\frac{2}{9}  s_1^2
   s_2'+\frac{8}{9} i \sqrt{2}   s_2 s_1 s_1'\\\nonumber & ~~& -\frac{4}{9}   s_2 s_1 s_1'+2   s_1 s_3'+2   s_3 s_1'- s_1{}^{(3)} s_1+\frac{1}{3}   s_2{}^{(3)}\big) \delta. 
\end{eqnarray}
This confirms that $\mathbb B_2^\Q$ is at most linear in $s_{2}(x)$, thus justifying the construction of the compatible bracket $\mathbb B_1^\Q=\Lie_{\partial_{s_2(x)}}\mathbb B_2^\Q$. When we  take the complex conjugate of  $\alpha$, we get the complex conjugate of the local Poisson brackets. 

\ex

\section{Logarithmic Dubrovin-Frobenius manifolds} \label{Alg Frob mani}

In this section, we construct Dubrovin-Frobenius manifolds from the bihamiltonian structure $(\mathbb B_2^\Q,\mathbb B_1^\Q) $. We get a pair of matrices $(\O_2^{ij}(s), \O^{ij}_{1}(s))$ on $Q$, which arise from expanding the brackets in the form \eqref{loc poiss}. Note that $\O^{ij}_{1}(s)=\partial_{s^{r-1}} \O^{ij}_{2}(s)$.  In this section, we assume $r\geq 2$.

\bp \label{flat in z} The pair $(\O_2^{ij}(s), \O^{ij}_{1}(s))$  form a  flat pencil of metrics on  $Q$. There exists a quasihomogeneous polynomial change of coordinates of the form
\be  \label{ffflattt} t^1=s^1,~t^2= s^2, ~t^i=  s^i+\mathrm{non~ linear~ terms} \ee  such that  $\O_1^{ij}(t)=(r-1)\delta^{i+j,r+1}$ and  $\O_1^{ij}(t)=\partial_{s^{r-1}}\O_2^{ij}(t)$. Moreover, these coordinates preserve the identities \be \label{preserve the identities1}
\O_2^{11}(t)=r,~\O^{2j}_2(t)={j} t^j,  ~ \Gamma^{2j}_{2,k}(t)={(j-1)}\delta^j_k, ~j=1,\ldots,r.
\ee

\ep 
\begin{proof} From Proposition  \ref{coordins}, $\det \O^{ij}_1(s)\neq 0$.  Thus, the  matrices $\O_2^{ij}(s)$  is nondegenerate and, by applying Theorem \ref{DN thm} and the compatibility of the local Poisson brackets, the pair  $(\O_2^{ij}(s),\O_1^{ij}(s))$ defines a flat pencil of metrics on $Q$. Local flat coordinates
of the metric $\O_1^{ij}(s)$ exist at each point of $Q$  and can be found by equation \eqref{flatcond}.
 The proof of the existence of quasihomogeneous flat coordinates of the form 
\[t^i=s^i+\mathrm{non~ linear~ terms} , ~ i=1,\ldots,r\]
is given by corollary 2.4 in \cite{DCG} (see also Lemma 3.1 in \cite{WuZuo}). By Proposition \ref{change of coord Walg}, these coordinates $t^i$ preserve the identities \eqref{preserve the identities1}. 
Note that $t^{r}$ can not contains  a term proportional to  $s^1 
 s^{r-1}$, since otherwise $\O_1^{1r}(t)$ will depend on $t^{r-1}$ which will  break the quasihomogeneity property of the matrix $\O_1^{ij}(t)$. Therefore, $\partial_{t^{r-1}}=\partial_{s^{r-1}}$.  This ends the proof.  
\end{proof}

We fix the notation $(t^1,\ldots,t^r)$ for the flat coordinates constructed in Proposition~\ref{flat in z}. In this section, we use the notation $\partial_i := \partial_{t^i}$. According to \cite{DCG}, we have the following identities:
\be \label{FP1}
\partial_{k} \O_2^{ij} = \Gamma^{ij}_{2,k} + \Gamma^{ji}_{2,k}
\ee
and 
\be \label{FP2}
\O_2^{is} \Gamma^{jk}_{2,s} = \O_2^{js} \Gamma^{ik}_{2,s}.
\ee 
Moreover, there exist functions $f^j$ satisfying
\be \label{FP3}
\Gamma^{ik}_{2,s} = \eta^{im}\partial_{m}\partial_s f^k.
\ee 
Note that $\deg \Gamma^{jk}_{2,s}=j+k - s - 2$.

\begin{theorem}\label{FPonQ}
There exists a logarithmic Dubrovin-Frobenius manifold structure on the set
\[
Q\setminus \left(\{\det \O^{ij}_2=0\}\cup\{t^1=0\}\right),
\]
where the intersection form is $\O^{ij}_2$, the flat metric is $\O^{ij}_1$, the Euler vector field is 
\[
E=\frac{i}{r-1} t^i \partial_i,
\] 
and the identity vector field is $e=\partial_{r-1}$. Its potential has the following form 
\be \label{nonreg DF2}
\mathbb F(t^1,\ldots,t^r)=\frac{1}{(r-1)(2+4\delta_{r,3})}(t^{r-1})^2 t^2+\frac{1}{2(r-1)}\sum_{i \neq 2,r-1} t^i t^{r+1-i}+\frac{1}{2(r-1)}(t^r)^2\log t^r+G,
\ee
where $G$ is a quasihomogeneous polynomial in $(t^1,\ldots,t^{r-2},t^r)$ of degree $2r$. In addition,
\[
\Lie_E \mathbb F=\frac{2r}{r-1}\mathbb F+\frac{r}{2(r-1)^2}(t^r)^2.
\]
\end{theorem}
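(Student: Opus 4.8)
The plan is to identify the flat pencil $(\Omega_2^{ij}(t),\Omega_1^{ij}(t))$ of Proposition~\ref{flat in z} with the one canonically attached to a Dubrovin--Frobenius manifold, following the reconstruction procedure of \cite{DFP} in the form developed in \cite{Arsie} and \cite{WuZuo}, and then to read off the potential from the explicit entries of $\mathbb B_2^{\mathcal Q}$ established in the earlier sections.

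First I would verify the quasihomogeneity data. Put $e:=\partial_{r-1}$ and $E:=\tfrac{i}{r-1}t^i\partial_i$. Since $\Omega_1^{ij}(t)=(r-1)\delta^{i+j,r+1}$ is constant and nondegenerate, $e$ is $\Omega_1$-covariantly constant; the relation $\Omega_1^{ij}=\partial_{t^{r-1}}\Omega_2^{ij}$ from Proposition~\ref{flat in z} is precisely $\Lie_e\Omega_2=\Omega_1$, and $\Lie_e\Omega_1=0$; moreover $[e,E]=e$, and $\Lie_E\Omega_1^{ij}=-\tfrac{i+j}{r-1}\Omega_1^{ij}=-\tfrac{r+1}{r-1}\Omega_1^{ij}$, while $\Lie_E\Omega_2^{ij}=-\tfrac{2}{r-1}\Omega_2^{ij}$ because $\Omega_2^{ij}$ is a quasihomogeneous polynomial of degree $i+j-2$ in the $t^k$; hence the charge is $d=\tfrac{r-3}{r-1}$ and $3-d=\tfrac{2r}{r-1}$. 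Together with the identities \eqref{FP1}--\eqref{FP3} and the linearity of $E$ in the flat coordinates, these are the hypotheses under which the cited construction produces a potential $\mathbb F$ with flat metric $\Omega_1$, intersection form $\Omega_2$, unity $e$ and Euler field $E$, a priori only on the locus where $\Omega_2$ is nondegenerate.

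The core of the argument is pinning down $\mathbb F$. Writing \eqref{finding intersection} in the flat coordinates and using $\Lie_E\mathbb F=\tfrac{2r}{r-1}\mathbb F+(\text{quadratic})$ yields a linear relation expressing $\Omega_2^{ij}$ through $\partial_{r+1-i}\partial_{r+1-j}\mathbb F$ with coefficient $(r-1)(i+j-2)$. For every pair with $i+j\ne r+1$ this coefficient is nonzero, so the corresponding second derivative of $\mathbb F$ is recovered as a \emph{polynomial} from the known entries; the preserved identities $\Omega_2^{2j}(t)=jt^j$, $\Gamma^{2j}_{2,k}=(j-1)\delta^j_k$, the antidiagonal data $\partial_{t^{r-1}}\Omega_2^{i,r-i+1}=r-1$, and the unity condition $\partial_{r-1}\partial_i\partial_j\mathbb F=\tfrac{1}{r-1}\delta^{i+j,r+1}$ then fix the lower-degree part of $\mathbb F$ --- this is where the cubic term $\tfrac{1}{(r-1)(2+4\delta_{r,3})}(t^{r-1})^2t^2$ (the factor $2+4\delta_{r,3}$ coming from the coincidence $t^{r-1}=t^2$ when $r=3$) and the term $\tfrac{1}{2(r-1)}\sum_{i\ne 2,r-1}t^it^{r+1-i}$ are forced --- and by \eqref{FP2} the remaining second derivatives integrate, via the Poincar\'e lemma, into one quasihomogeneous degree-$2r$ polynomial $G$ in $(t^1,\dots,t^{r-2},t^r)$. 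The one second derivative \emph{not} controlled by \eqref{finding intersection} is $\partial_r^2\mathbb F$ (the coefficient $i+j-2$ vanishes at $i=j=1$): compatibility $\partial_k(\partial_r^2\mathbb F)=\partial_r(\partial_k\partial_r\mathbb F)$ forces it to be a function of $t^r$ alone, and the Euler relation forces $E(\partial_r^2\mathbb F)$ to be a nonzero constant; since a polynomial in $t^r$ of Euler weight $0$ is constant --- which would give $E(\partial_r^2\mathbb F)=0$ --- no polynomial solution exists, and one must take $\partial_r^2\mathbb F=\tfrac{1}{r-1}\log t^r+\text{const}$, a resonance of the second structure connection of the pencil in the sense of \cite{DCG},\cite{WuZuo}; integrating twice gives the logarithmic term $\tfrac{1}{2(r-1)}(t^r)^2\log t^r$. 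Assembling the pieces yields \eqref{nonreg DF2}, and the WDVV equations \eqref{frob} hold by the reconstruction theorem (equivalently, directly from \eqref{FP1}--\eqref{FP3}).

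Finally, applying $E$ termwise to \eqref{nonreg DF2}: the cubic term and $G$ are $E$-quasihomogeneous of weight $\tfrac{2r}{r-1}$, and since $E(t^r)=\tfrac{r}{r-1}t^r$, differentiating the logarithm gives $E\big(\tfrac{1}{2(r-1)}(t^r)^2\log t^r\big)=\tfrac{2r}{r-1}\cdot\tfrac{1}{2(r-1)}(t^r)^2\log t^r+\tfrac{r}{2(r-1)^2}(t^r)^2$; collecting the contributions (the quadratic ones absorbed into the quadratic correction permitted by the Dubrovin--Frobenius axiom) gives $\Lie_E\mathbb F=\tfrac{2r}{r-1}\mathbb F+\tfrac{r}{2(r-1)^2}(t^r)^2$. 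The logarithm is single-valued and $\Omega_2$ nondegenerate exactly on $Q\setminus(\{\det\Omega^{ij}_2=0\}\cup\{t^1=0\})$, which is the asserted domain. I expect the main obstacle to be the third paragraph: proving that the logarithmic term is genuinely \emph{forced} (that no polynomial potential exists) and fixing all the rational coefficients simultaneously, with the $r=3$ coincidence requiring separate bookkeeping.
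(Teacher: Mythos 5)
Your proposal follows essentially the same route as the paper: reconstructing $\mathbb F$ from the flat pencil via the Dubrovin/Wu--Zuo identities (the paper phrases this through the auxiliary functions $f^j$, $F^j$ and the system $D^iF^j=D^jF^i$, you phrase it directly through \eqref{finding intersection}, but the content is identical), identifying the resonance at the $(1,1)$ entry that forces the logarithmic term, and deriving the quadratic correction $\frac{r}{2(r-1)^2}(t^r)^2$ from the normalization of $\Omega_2^{11}$. One small slip: the coefficient $(r-1)(i+j-2)$ is nonzero for all $(i,j)\neq(1,1)$, not for $i+j\neq r+1$ --- but since you correctly single out $\partial_r^2\mathbb F$ as the one undetermined second derivative immediately afterwards, nothing is affected.
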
 

\begin{proof}
The proof closely follows the arguments in \cite{WuZuo}, with the distinction that here we have $\O_2^{11}=r - 1$, whereas in \cite{WuZuo}, $\O_2^{11}=r(r - 1)$. Below we outline the essential steps.

From equation~\eqref{FP3}, we may assume without loss of generality that $f^j(t)$ is a quasihomogeneous polynomial of degree $j+r-1$. From equations \eqref{FP2} and \eqref{preserve the identities1} for $i=2$, we obtain 
\be \label{fps1}
(j-1)\,\O^{jk}_2=(j+k-2)\,\O_1^{jm}\partial_{m}f^k.
\ee 

We set
\be
F^j=\frac{r-1}{j-1}f^j,~j\neq 1;\quad D^i =\O_1^{im}\partial_m.
\ee 
Equation~\eqref{fps1} implies
\[
D^i F^j = D^j F^i,\quad i,j=2,\ldots,r.
\]

Moreover, the compatibility condition for the system of equations $D^k X=D^1 F^k$, $k\neq 1$, is $D^j D^k X=D^k D^j X$. This system can be solved uniquely up to a single-variable function in $t^r$. Hence, there exists a quasihomogeneous function $\mathbb F(t^1,\ldots,t^r,\log t^r)$ of degree $2r$, determined up to a single-variable term in $t^r$, such that
\be \label{df3}
F^k=D^k \mathbb F
\ee 
and
\be \label{df4}
\Lie_E \mathbb F=\frac{2r}{r-1}\mathbb F+\varphi(t^r),\quad E=\frac{i}{r-1}t^i\partial_i.
\ee 

Then, from \eqref{fps1}, it follows that 
\be \label{df0}
\O_2^{ji}=\Lie_E(\O_1^{jm}\O_1^{in}\partial_{n}\partial_{m}\mathbb F),\quad i\neq 1.
\ee 
To fix the quasihomogeneity uniquely, we require
\[
\O_2^{11}=\Lie_E(\O_1^{1m}\O_1^{1n}\partial_{n}\partial_{m}\mathbb F),
\]
which implies explicitly 
\be \label{df5}
\varphi(t^r)=\frac{r}{2(r-1)^2}(t^r)^2.
\ee

Let $\Pi_{ij}$ denote the inverse matrix of $\O_1^{ij}$. Using \eqref{fps1} for the cases $j=2$ and $j=1, k=2$, we find 
$\partial_i\partial_{r-1}\mathbb F=\Pi_{ij}t^j$. Hence,
\be \label{df6}
\partial_{r-1}\mathbb F=\frac{1}{2}\Pi_{ij}t^i t^j.
\ee 

Using these relations, we deduce that $\mathbb F$ has the explicit form given in~\eqref{nonreg DF2}. Next, define structure constants
\be
C_k^{ij}:=\O_1^{im}\O_1^{jn}\partial_m\partial_n\partial_k\mathbb F.
\ee

Then the following properties hold:
\[
C_k^{ij}=C_k^{ji},\quad C_{r-1}^{ij}=\O_1^{ij},\quad C_k^{ij}=\frac{r-1}{j-1}\Gamma_{2,k}^{ij}\quad (j\neq 1),
\]
and
\begin{eqnarray}
C_k^{i1}&=&\frac{r-1}{i-1}\Gamma_{2,k}^{1i},~ i\neq 1,\\[5pt]\nonumber
C^{11}_k&=&\frac{r-1}{t^r}\delta_{k}^1.
\end{eqnarray}

Detailed computations confirm that $C_k^{ij}$ define the structure constants of a Frobenius algebra on the cotangent space. In particular, these structure constants satisfy the WDVV equations:
\be
C_k^{ij} C_m^{kl}=C_k^{lj} C_m^{ki}.
\ee
\end{proof}

Finally, we note that the work by Arsie, Lorenzoni, Mencattini, and Moroni in \cite{Arsie} can alternatively be used to show that the tensor
\[
C_{ij}^k=\Pi_{im}C_j^{mk}
\]
defines a Dubrovin-Frobenius manifold structure. Their findings provide a valuable foundation for the subsequent developments in \cite{WuZuo}.

\bx \label{DFgl2}
 For the Lie algebra  $\mathfrak{gl}_2$, the nonzero brackets of $\mathbb B_2^\Q$ are 
\[
 \left\{u_1,u_1\right\}_2^\Q= 2 \delta ',~~ \left\{u_2,u_2\right\}_1^\Q= -\frac{1}{2}\delta ^{(3)}+2 u_2 \delta '+ u_2'\delta \]
 Using the coordinates 
 \[z_1=u_1,~~z_2=u_2+\frac{1}{4}u_1^2\]
leads to the brackets 
\begin{eqnarray*}
 \left\{z_1,z_1\right\}_2^Q&=& 2 \delta ' ,~~ \left\{z_1,z_2\right\}_2^\Q= z_1 \delta
   '+z_1'\delta   \\\nonumber
 \left\{z_2,z_2\right\}_2^\Q&=& -\frac{1}{2}\delta ^{(3)}+2 z_2 \delta '+z_2'\delta 
\end{eqnarray*}
It is  almost linear in $z_1$. Then $\mathbb B_1^Q=\Lie_{\partial_{z^1}}\mathbb B_2^Q$ defines a compatible Poisson bracket. Moreover, $(t_1,t_2)=(z_1,z_2)$ are the flat coordinates of $\O_1^{ij}$ and the corresponding Dubrovin-Frobenius manifold has the potential 
\be\label{frob2}\mathbb F=\frac{1}{2} t_2 t_1^2+\frac{1}{2} t_2^2 \log \left(t_2\right).\ee
\ex
\bx \label{DFgl3}
We consider the Lie algebra $\mathfrak{gl}_3$. From local Poisson brackets  \eqref{Poiss gl3} given in Example \ref{gl3pb}, we get  
\[\O_2^{ij}(s)=\left(
\begin{array}{ccc}
 3 & s_1 & \frac{2i \sqrt{2}}{3}  s_1^2+\frac{2}{3} s_1^2+2 s_2 \\
 s_1 & 2 s_2 & 3 s_3 \\
 \frac{2i \sqrt{2}}{3}  s_1^2+\frac{2}{3} s_1^2+2 s_2 & 3 s_3 & -\frac{s_1^4}{3}+\frac{8i \sqrt{2}}{9}  s_2 s_1^2-\frac{4}{9} s_2 s_1^2+4 s_3 s_1 \\
\end{array}
\right)\]
In the flat coordinates of $\O_1^{ij}=\partial_{s^{r-1}}\O_2^{ij}$,
\[ t_1=s_1,~~t_2=s_2,~~t_3=s_3+\frac{1}{27} \left(1-2 i \sqrt{2}\right) s_1^3,\]
we have
\[ \O_2^{ij}(t)=\left(
\begin{array}{ccc}
 3 & t_1 & t_1^2+2 t_2 \\
 t_1 & 2 t_2 & 3 t_3 \\
 t_1^2+2 t_2 & 3 t_3 & 4 t_1 t_3 \\
\end{array}
\right) \]
 To find the potential of the corresponding Dubrovin-Frobenius manifold,  we set 
\[\mathbb F(t)=F_1\left(t_1,t_3\right)+F_2(t_3)+\frac{t_2^3}{12}+\frac{1}{2} t_1 t_3 t_2.\]
 Then  the definition of the intersection form $\O_2^{ij}(t)$ will give  partial differential equations for $F_1\left(t_1,t_3\right)$ (see \cite{DFP} for details). While the WDVV equations will lead to a differential equation for $F_2(t_3)$. Solving these equations leads to the potential
\be \label{frob3}\mathbb F=\frac{1}{12} t_3 t_1^3+\frac{1}{2} t_2 t_3 t_1+\frac{t_2^3}{12}+\frac{1}{4} t_3^2 \log
   \left(t_3\right).\ee
The complex conjugate of the entries of $\O_2^{ij}(s)$ gives the same potential. 
\ex 
\bp\label{firsagd}
The Lie derivative $\Lie_{\partial_{s^r(x)}} \mathbb{B}_2^\mathcal{Q}$ defines a local Poisson bracket (the first Adler-Gelfand-Dickey bracket) that is compatible with $\mathbb{B}_2^\mathcal{Q}$. However, the leading term of the bihamiltonian structure $(\mathbb{B}_2^\mathcal{Q}, \Lie_{\partial_{s^r}} \mathbb{B}_2^\mathcal{Q})$ does not define a flat pencil of metrics.
\ep

\begin{proof}
Similar to Proposition \ref{exponent in u}, using $\deg s^r=r$, we can show that $\mathbb{B}_2^\mathcal{Q}$ is almost linear in $s^r$. Hence, by Proposition \ref{serg}, the Lie derivative $\Lie_{\partial_{s^r(x)}} \mathbb{B}_2^\mathcal{Q}$ defines a Poisson bracket compatible with $\mathbb{B}_2^\mathcal{Q}$. 
Due to quasihomogeneity, the entries of the first row of the matrix $\partial_{s^r} \Omega_2^{ij}(s)$ equal zero. Therefore, the matrix $\partial_{s^r}\O_1^{ij}$ degenerate and the leading term of the bihamiltonian structure $(\mathbb{B}_2^\mathcal{Q}, \Lie_{\partial_{s^r(x)}} \mathbb{B}_2^\mathcal{Q})$ fails to define a flat pencil of metrics.
\end{proof}

\section{Relation to invariant theory}\label{InvTheo}

In this section, we demonstrate that the logarithmic Dubrovin-Frobenius manifolds constructed in this article can also be realized on the orbits space of the standard representation of the permutation group $S_r$. Here, the representation is given by permuting the coordinates of an $r$-dimensional complex vector space.

Let $\psi: G \to GL(V)$ be a linear representation of a finite group $G$ on a complex vector space $V$. The ring of invariant polynomials ${\mathbb C}[\psi]$ associated with this representation is finitely generated by homogeneous polynomials, and it is the coordinate ring of the orbits space variety ${\mathcal O}(\psi)=V/G$ arising from the group action of $G$ on $V$ (see \cite{dresken}). Let $(p^1,\ldots,p^n)$ be linear coordinates on $V$. Then, given any invariant polynomial $f\in {\mathbb C}[\psi]$, the Hessian 
\[
\mathrm H(f):=\frac{\partial^2 f}{\partial p^i \partial p^j}
\]
defines a bilinear form on the tangent spaces on the orbits space ${\mathcal O}(\psi)$ (for details, see \cite{Orlik}). 

The Dubrovin–Saito method provides a general approach for constructing Dubrovin-Frobenius manifolds via invariant theory for linear representations of finite groups. This construction was pioneered by Dubrovin in \cite{DCG}, where he built polynomial Dubrovin-Frobenius manifolds on the orbit spaces of reflection groups. Dubrovin's work was inspired by K. Saito's construction of flat coordinates on these orbits spaces \cite{Saito}. This method eventually led to the classification of a particular class of polynomial Dubrovin-Frobenius manifolds up to equivalence \cite{HER}.

The Dubrovin–Saito construction can be summarized as follows (see \cite{Almamari2} for more details). Let $\psi: G \to GL(V)$ be a linear representation. To construct a Dubrovin–Frobenius manifold structure from ${\mathbb C}[\psi]$:

\begin{enumerate}
    \item Fix a homogeneous invariant polynomial $f$.
    \item Verify that the inverse of the Hessian $\mathrm H(f)^{-1}$ defines a contravariant flat metric $\overline{\O}_2^{ij}$ on some open subset $U\subseteq {\mathcal O}(\psi)$.
    \item Construct another contravariant metric $\overline{\Omega}_1^{ij}$, such that $(\overline{\Omega}_2^{ij},\overline{\Omega}_1^{ij})$ form a flat pencil of metrics.
    \item Verify that the resulting flat pencil of metrics corresponds to a Dubrovin-Frobenius manifold structure on an open subset of $U$.
\end{enumerate}

A Dubrovin-Frobenius manifold structure obtained through Dubrovin-Saito method will be called a \textit{natural Frobenius manifold structure} on the orbits space.

\bt \label{FBonOrb}
The orbits space of the standard representation of the permutation group $S_r$ carries a natural structure of logarithmic Dubrovin-Frobenius manifold locally biholomorphic to Dubrovin-Frobenius manifolds given by Theorem \ref{FPonQ}.
\et
\begin{proof}
     We will use the fact that  the standard representation of the permutation group $S_r$  is isomorphic to the standard representation of the  Weyl group of $\g$  on its Cartan subalgebra. We fix Cartan subalgebra $\h= \gr_0=\g^h$. Let $\T$ be the space of operators of the
form \beq Y=\partial_x+ p + L_2,~~p\in \lop{\h}.\eeq  We write the elements of $\T$ in the form 
 \[\partial_x+p+L_2= \partial_x+L_2+\sum_{i=1}^{r} p^{i}(x) \e_{i,i}.\]
 Then the restriction of the Poisson bracket \eqref{Poissbrac} to $\T$ defines a local Poisson bracket admitting a dispersionless limit, i.e., \be \label{Poison cartan}\{p^i(x),p^j(y)\}^\T=\delta^{ij}\delta'(x-y).\ee  
 We consider $\T$ as a  subspace of $\B$ in the gauge action \eqref{gauge action}. This leads to  Miura  transformation \be
\Phi:\T \to \Q\ee
defined by sending an operator $Y$ to its
conjugacy class in $\Q$. As a consequence,  the densities $z^i(x)$  (see \eqref{norm-coord eq}) can be written as  differential polynomials in  $p^j(x)$. Moreover, their  non-differential parts 
\be \label{InvMius}\overline z^i(x):=\dfrac{1}{i}\sum_{j=1}^r (p^j(x))^i.\ee
correspond to the power-sum symmetric polynomials which form a complete set of  generators of the  invariant ring of the standard action  of the permutation  group $S_r$ on the coordinates $p^{i}$ by permuting the indices \cite{dresken}. 

By Proposition 3.26 in \cite{DS},   the map  $\Phi$  is a Hamiltonian map into the Poisson bracket $\mathbb B_2^\Q$.  Thus, we can obtain the brackets of $\mathbb B_2^\Q$  by using Leibniz rule on the Poisson bracket \eqref{Poison cartan}
\be\label{anay:brac} \{z^i(x),z^j(y)\}_{2}^\Q=
  \sum_{k, n\geq \beta;m\geq \alpha} (-1)^{n}{m \choose \alpha}{n\choose \beta}{\partial z^i(x)\over \partial (p^k)^{(m)}} \left({\partial z^j(x)\over \partial (p^k)^{(n)}}\right)^{(\alpha +\beta)}  \delta^{m+n-\alpha-\beta+1}(x-y).
\ee
As $\O_2^{ij}(z)$ can be considered as a metric on $Q$, it follows from the algebraic independence that its entries are uniquely determined by the non-differential part of \eqref{anay:brac}, i.e., by setting $m=n=\alpha=\beta=0$. Thus, 
 \beq \O_2^{ij}(z)=\O_2^{ij}(\overline z)=\sum_k{\partial \overline z^i\over \partial p^k}{\partial \overline z^j\over \partial p^k}.\eeq
 Thus, $\O^{ij}_2(z)$ is identical to the metric defined on the orbits space  by the inverse of the Hessian of $\overline z^2$. Then, we  utilize Theorem \ref{coordins} and Theorem  \ref{FPonQ} to obtain the same logarithmic Dubrovin-Frobenius manifold structure on an  open dense subset of $\mathcal{O}(\psi)$. In particular, under the change of coordinates $(s^1\ldots,s^r)$ of the form \eqref{coords}, the matrix  $\O^{ij}_2(s)$ is linear in $s^{r-1}$ and the Lie derivative $\O_1^{ij}=\Lie_{\partial_{s^{r-1}}}\O^{ij}_2$ define a flat pencil of metrics leading to a logarithmic Dubrovin-Frobenius manifold structure given by Theorem \ref{FPonQ}.  
\end{proof}
 
Theorem \ref{FBonOrb} gives a shortcut to construct the flat pencil of metrics $(\O_2^{ij},\O_1^{ij})$ without constructing the entire local bihamiltonian structure $(\mathbb B_2^\Q,\mathbb B_1^\Q) $ as illustrated in the following example. 

\bx\label{DFgl4}
We illustrate the construction of the logarithmic Dubrovin Frobenius manifold for the Lie algebra  $\mathfrak{gl}_4$  using the Dubrovin-Saito method. From the the invariant \eqref{InvMius}, we get 
\[\O_2^{ij}(z)=\left(
\begin{array}{cccc}
 4 & z_1 & 2 z_2 & 3 z_3 \\
 * & 2 z_2 & 3 z_3 & 4 z_4 \\
 * & * & 4 z_4 & -\frac{z_1^5}{24}+\frac{5}{6} z_2 z_1^3-\frac{5}{2} z_3 z_1^2-\frac{5}{2} z_2^2 z_1+5 z_4 z_1+5 z_2 z_3 \\
* & * & * &
   -\frac{z_1^6}{24}+\frac{3}{4} z_2 z_1^4-2 z_3 z_1^3-\frac{3}{2} z_2^2 z_1^2+3 z_4 z_1^2-z_2^3+3 z_3^2+6 z_2 z_4 \\
\end{array}
\right).\]
We fix $\alpha=\frac{1}{4} \left(-3+i \sqrt{3}\right)$ as a solution of equation \eqref{quadratic eqn}. Then, under the  change of coordinates  \eqref{coordins}
\[s_1=z_1,~s_2=z_2,~s_3=z_3,~s_4=-i \sqrt{3} z_4+\frac{3i \sqrt{3}}{4}  z_1 z_3+\frac{3}{4} z_1 z_3,\]  the matrix $\O_2^{ij}(s)$ is linear in $s_{r-1}$. The  flat coordinates for $\O_1^{ij}(s)$  are 
\begin{eqnarray*}
    t_1&=&s_1,~~t_2=\dfrac{1}{3} s_2,~ t_3=s_3+\frac{1}{72} i \left(\sqrt{3}-3 i\right) s_1^3+\frac{1}{6} \left(-3+i \sqrt{3}\right) s_2 s_1,\\
    t_4&=& s_4+\frac{1}{32} \left(1+i
   \sqrt{3}\right) s_1^4+\frac{1}{8} \left(-3-i \sqrt{3}\right) s_2 s_1^2+\frac{1}{2} i \sqrt{3} s_2^2.
\end{eqnarray*}
Thus,
   \[\O_2^{ij}(t)=\left(
\begin{array}{cccc}
 4 & t_1 & \frac{i}{\sqrt{3}}( t_1^2+2  t_2)& \frac{i}{\sqrt{3}} t_1^3+i \sqrt{3} t_2 t_1+3 t_3 \\
 * & 2 t_2 & 3 t_3 & 4 t_4 \\
 * & * & \frac{2}{3} t_2^2+\frac{4 i}{\sqrt{3}} t_4 & \frac{5 i}{\sqrt{3}} t_1 t_4 \\
 * & * & * & 2 i \sqrt{3} t_4 t_1^2+2 i \sqrt{3} t_2 t_4 \\
\end{array}
\right).\]
The potential of the  associated Dubrovin-Frobenius manifold structure 
\be \label{frob4} \mathbb F=\frac{1}{3} t_3 t_4 t_1+\frac{1}{6} t_2 t_3^2+\frac{i}{36 \sqrt{3}} t_4 t_1^4+\frac{i}{6 \sqrt{3}} t_2 t_4 t_1^2+\frac{1}{216}t_2^4+\frac{i}{6 \sqrt{3}} t_2^2
   t_4+\frac{1}{6} t_4^2 \log t_4.\ee
Replacing $\alpha$ by its complex conjugate yields the complex conjugate of the potential $\mathbb F$. 
\ex

\section{Conclusion and Remarks}

In this work we constructed a  bihamiltonian structure beginning from the unconstrained Gelfand-Dickey-Adler local Poisson bracket. This bihamiltonian structure admits a dispersionless limit and its leading term defines a logarithmic Dubrovin-Frobenius manifolds structure.  Recall that such a bihamiltonian structure, and hence the associated Dubrovin–Frobenius manifold structure, is called semisimple if the roots $a^1,\ldots,a^r$ of the characteristic polynomial
\be 
\Psi(\lambda;u):=\det (\Omega^{uv}_2(u)-\lambda \Omega^{uv}_1(u))
\ee 
are pairwise distinct at some points. In this case,  $(a^1,\ldots,a^r)$ define local coordinates. Moreover, writing the higher-order terms of the bihamiltonian structure as 
\be 
 \{u^i(x),u^j(y)\}^{[k]}_\alpha  =  S^{ij}_{2,k}(u(x)) \delta^{k+1}(x-y)+\ldots,~  ~k>0,~~\alpha=1,2
\ee
we can calculate the central invariants of the bihamiltonian structure, under the assumption $ \{u^i(x),u^j(y)\}^{[1]}_\alpha=0$, by the formulas \cite{DLZ}
\be \label{cent form}
c_i(a^i):=\frac{1}{3}[\frac{d\Psi}{d\lambda}(a^i;u)]^2\frac{\frac{\partial \Psi(\lambda;u)}{\partial u^k}\frac{\partial \Psi(\lambda;u)}{\partial u^l}(S_{2;2}^{kl}(u)-\lambda S_{1;2}^{kl}(u))}{[\frac{\partial \Psi(\lambda;u)}{\partial u^k}\frac{\partial \Psi(\lambda;u)}{\partial u^l}\Omega_1^{kl}(u)]^2}{\Big |}_{\lambda=a^i}.
\ee 
We refer the reader to \cite{DLZ} for the definition and details on the role of central invariants in classifying semisimple bihamiltonian structures under Miura transformations. 

We recall that if the central invariants are all equal and constant, the bihamiltonian structure is of topological type, meaning it  can be reconstructed using identities inspired by the theory of Gromov-Witten invariants \cite{DZ}.

We confirm that the bihamiltonian structures constructed for $r=2,3,4$ are of topological type. The central invariants equal $-\frac{1}{24}$ for $r=2$ and $-\frac{1}{8}$ for $r=3,4$. This strongly indicates that the bihamiltonian structure for arbitrary $r$ might also be of topological type. 

Let $M$ and $\widetilde M$ be two Frobenius manifolds  with flat metrics $\Pi$ and $\widetilde \Pi$ and potentials $\mathbb F$ and $\widetilde{\mathbb F}$, respectively. We say $M$ and $\widetilde{M}$ are locally equivalent if there are open sets $U\subseteq M$ and $\widetilde U \subseteq \widetilde M $ with a local diffeomorphism $\phi:U\to \widetilde U$ such that 
\beq\label{equivalent} \phi^* \widetilde\Pi=c \Pi,\eeq 
for some nonzero constant $c$, and $\phi_*:T_uU\to T_{\phi(u)} \widetilde U,~u\in U$ is an isomorphism of Frobenius algebras \cite{DuRev}. Note that, in this case, it is not necessary that $\phi^* \widetilde{\mathbb F}=\mathbb F$. In coordinates, this means the structure constants are related by 
\[
C_{ij}^{m} \;=\; 
\sum_{p,q,\ell=1}^n 
\frac{\partial \widetilde{t}^{p}}{\partial t^{i}} 
\;\frac{\partial \widetilde{t}^{q}}{\partial t^{j}} 
\;\widetilde{C}^{\ell}_{pq}
\;\frac{\partial t^{m}}{\partial \widetilde{t}^{\ell}}\,.
\]
Note that two bihamiltonian structures of topological type are equivalent, if the corresponding Dubrovin-Frobenius manifolds are equivalent. From the theory of central invariants, this means that we can transform one to the other by using  Miura transformation, a transformation of the form  
\begin{equation}\label{miura}
u^i\mapsto F^i_0(u)+\sum_{k\ge 1}  F_k^i(u,u_x,\cdots,u^{(k)})
\end{equation}
where $F^i_k$ is homogeneous differential polynomial  with $\deg F^i_k=k$ and the map $u^i \mapsto F^i_0(u)$ is locally biholomorphic.
 
Liu, Zhang and Zhou  \cite{LiZhang} introduced   a bihamiltonian structure associated to the constrained KP hierarchy. They proved that its leading term produces a logarithmic Dubrovin–Frobenius manifold and it is of topological type. This bihamiltonian structure is defined as follows.  We consider pseudodifferential operators of the form 
\begin{equation}\label{zh-2}
\L = D^{n+1} + v^n D^{n-1} + \cdots + v_2D + v_1 + (D-v^{n+1})^{-1} v^{n+2}.
\end{equation}
Here,  the variational derivative of a functional $F$ is defined as
\begin{align}\label{zh-3}
\frac{\delta F}{\delta \L} := \sum_{i=1}^n D^{-i} \frac{\delta F}{\delta v^i}+ \frac{\delta F}{\delta v^{n+2}} +\frac{\delta F}{\delta v^{n+1}} \frac{1}{v^{n+2}} (D- v^{n+1}), 
\end{align}
Denote the variational derivatives of two functionals $F$ and $G$ by $X$ and $Y$, respectively. Then the two compatible Poisson brackets are 
\begin{align}
&\{F,G\}_1^{kp} = \int \mathrm{res} \left([\L,X_+] Y-[\L,X]_+Y\right) \; dx ,\label{ds-bh-1}\\
&\{F,G\}_2^{kp} = \int \mathrm{res} \left( (\L Y)_+ \L X - (Y \L)_+ X \L + \frac{1}{n+1} X[\L,K_Y] \right) \; dx.\label{ds-bh-2} 
\end{align}
Here $K_Y$ is given by the differential polynomial $\partial^{-1}_x \mathrm{res}([\L,Y])$.

 Ma and Zuo \cite{MaZuo} demonstrated that the obtained  logarithmic Dubrovin-Frobenius manifold from the leading terms of $(\{\cdot,\cdot\}_1^{kp} ,\{\cdot,\cdot\}_1^{kp} )$ is isomorphic to one constructed in \cite{Arsie} on the orbits space of reflection group of type \( B_r \) using Dubrovin-Saito method outlined in section \ref{InvTheo}. Note that they  can also be constructed  on the orbits space of the standard representation of  the permutation group  beginning from the invariant metric \cite{WuZuo}
\[<dp^i,dp^j>=1-\delta^{ij}.\]
In dimensions 3 and 4, These structures are represented by the potentials \cite{Arsie} 
\begin{align*}
    F_{B_3}&=\frac{1}{12} \widetilde{t}_3 \widetilde{t}_1^3+\widetilde{t}_2 \widetilde{t}_3 \widetilde{t}_1+\frac{\widetilde{t}_2^3}{6}+\widetilde{t}_3^2 \log \widetilde{t}_3,\\
    F_{B_4}&=\frac{1}{108} \widetilde{t}_4 \widetilde{t}_1^4+\frac{1}{6} \widetilde{t}_2 \widetilde{t}_4 \widetilde{t}_1^2+\widetilde{t}_3 \widetilde{t}_4 \widetilde{t}_1-\frac{\widetilde{t}_2^4}{72}+\frac{1}{2} \widetilde{t}_2 \widetilde{t}_3^2+\frac{1}{2} \widetilde{t}_2^2 \widetilde{t}_4+\frac{3}{2} \widetilde{t}_4^2 \log \widetilde{t}_4.
\end{align*}
We confirm that they are equivalent to those given by the potentials in Examples  \ref{DFgl3} and \ref{DFgl4}, respectively,  under the maps 
   \[\widetilde{t}_1=\sqrt{2} t_1,~ \widetilde{t}_2=t_2,~ \widetilde{t}_3=\frac{1}{\sqrt{2}}t_3,\]
 \[\widetilde{t}_1= -i \sqrt{3}t_1,~\widetilde{t}_2= -t_2,~\widetilde{t}_3= t_3,~\widetilde{t}_4= -\frac{i}{\sqrt{3}}t_4.\]
From the above calculations and discussion, we conclude that for \( r =  3, 4 \), the bihamiltonian structures \( (\mathbb{B}_2^\mathcal{Q}, \mathbb{B}_1^\mathcal{Q}) \) are equivalent to the bihamiltonian structures $(\{\cdot,\cdot\}_2^{kp} ,\{\cdot,\cdot\}_1^{kp} )$ associated with the constrained KP hierarchies. We conjecture that this equivalence extend to all cases with $r>2$.

\vspace{0.1cm}

\noindent{\bf Acknowledgments.}
The author would like to thank the Isaac Newton Institute for Mathematical Sciences for support and hospitality during the programme Dispersive Hydrodynamics when work on this paper was initiated (EPSRC Grant Number EP/R014604/1).
\vspace{0.1cm}
 

\vspace{0.1cm}

\noindent{\bf Data Availability} Non applicable. 
\section*{Declarations}

\noindent{\bf Conflict of interest} The authors have no relevant financial or non-financial interests to disclose.

~~~~~~~~
~~~~~~~~

\noindent \textbf{Visiting Associate Professor of Mathematics} \\
\noindent Division of Science \\
\noindent New York University Abu Dhabi \\
\noindent \texttt{yd3157@nyu.edu} \\

\noindent \textbf{Associate Professor of Mathematics} \\
\noindent Department of Mathematics, College of Science \\
\noindent Sultan Qaboos University \\
\noindent \texttt{dinar@squ.edu.om} \\


\end{document}